\definecolor{darkgreen}{RGB}{0,160,0}
\definecolor{orange}{RGB}{255,165,0}
\definecolor{purple}{RGB}{147,112,219}
\newcommand{\E}{\mathbb{E}}
\newcommand{\N}{\mathbb{N}}
\newcommand{\1}{\mathbf{1}}
\newcommand{\Ber}{\mathrm{Ber}}
\newcommand{\TOP}{\operatorname{TOP}}
\newcommand{\KL}{\operatorname{KL}}
\renewcommand{\hat}{\widehat}
\renewcommand{\tilde}{\widetilde}
\renewcommand{\P}{\mathbb{P}}
\newtheorem{lemma}{Lemma}
\newtheorem{theorem}{Theorem}
\newtheorem{proposition}{Proposition}
\newtheorem{remark}{Remark}
\title{A KL-LUCB Bandit Algorithm for \\ Large-Scale Crowdsourcing }
\author{Bob Mankoff\\
Former Cartoon Editor of the New Yorker \\
\href{mailto:bmankoff@hearst.com}{bmankoff@hearst.com}
\and
Robert Nowak\\
University of Wisconsin -- Madison\\
\href{mailto:rdnowak@wisc.edu}{rdnowak@wisc.edu}
\and
Ervin T\'anczos\footnote{To whom correspondence should be addressed.}\\
University of Wisconsin -- Madison\\
\href{mailto:tanczos@wisc.edu}{tanczos@wisc.edu}
\thanks{This work was partially supported by the NSF grant IIS-1447449 and the AFSOR grant FA9550-13-1-0138.}
}
\begin{document}
% \nipsfinalcopy is no longer used

\maketitle

\begin{abstract}
This paper focuses on best-arm identification in multi-armed bandits with bounded rewards. We develop an algorithm that is a fusion of lil-UCB and KL-LUCB, offering the best qualities of the two algorithms in one method. This is achieved by proving a novel anytime confidence bound for the mean of bounded distributions, which is the analogue of the LIL-type bounds recently developed for sub-Gaussian distributions. We corroborate our theoretical results with numerical experiments based on the New Yorker Cartoon Caption Contest.
\end{abstract}

\section{Multi-Armed Bandits for Large-Scale Crowdsourcing}

This paper develops a new multi-armed bandit (MAB) for large-scale
crowdsourcing, in the style of the KL-UCB
\cite{KLUCB_2011_Garivier,maillard2011finite,cappe2013kullback}. Our
work is strongly motivated by crowdsourcing
contests, like the New Yorker Cartoon Caption contest \cite{cnet}\footnote{For more details on the New Yorker Cartoon Caption Contest, see the Supplementary Materials.}.
The new approach targets the ``best-arm identification problem''
\cite{audibert2010best} in the fixed confidence setting and addresses
two key limitations of existing theory and algorithms:
\begin{enumerate}[leftmargin=*]
 \item[(i)] State of the art algorithms for best arm identification are
  based on sub-Gaussian confidence bounds \cite{lilUCB_2014} and fail
  to exploit the fact that rewards are usually bounded in
  crowdsourcing applications.
\item[(ii)] Existing KL-UCB algorithms for best-arm identification do
  exploit bounded rewards \cite{KLUCB_2013_kaufmann} , but have
  suboptimal performance guarantees in the fixed confidence setting,
  both in terms of dependence on problem-dependent hardness parameters
  (Chernoff information) and on the number of arms, which can be large
  in crowdsourcing applications.
\end{enumerate}
The new algorithm we propose and analyze is called {\bf lil-KLUCB}, since it
is inspired by the lil-UCB algorithm \cite{lilUCB_2014} and the KL-LUCB
algorithm \cite{KLUCB_2013_kaufmann}.  The lil-UCB algorithm is based
on sub-Gaussian bounds and has a sample complexity for best-arm
identification that scales as
$$\sum_{i\geq 2} \Delta_i^{-2} \log (\delta^{-1}\log \Delta_i^{-2}) \
, $$ where $\delta \in (0,1)$ is the desired confidence and
$\Delta_i = \mu_1 -\mu_i$ is the gap between the means of the best arm
(denoted as arm $1$) and arm~$i$.  If the rewards are in $[0,1]$, then the
KL-LUCB algorithm has a sample complexity scaling essentially like\footnote{A
  more precise characterization of the sample complexity is given in
  Section~2.}
$$\sum_{i\geq 2} (D_i^*)^{-1} \log (n \delta^{-1} (D^*_i)^{-1}) \ , $$ where $n$
is the number of arms and $D_i^* := D^* (\mu_1 ,\mu_i)$ is the
Chernoff-information between a $\Ber (\mu_1 )$ and a $\Ber (\mu_i )$
random variable\footnote{The Chernoff-information between random
  variables $\Ber (x)$ and $\Ber (y)$ ($0<x<y<1$) is
  $D^* (x,y) = D(z^*,x) = D(z^*,y)$, where $D(z,x)=z\log \tfrac{z}{x}+(1-z)\log \tfrac{1-z}{1-x}$ and $z^*$ is the unique
  $z\in (x,y)$ such that $D(z,x)=D(z,y)$.}. Ignoring the logarithmic
factor, this bound is optimal for the case of Bernoulli rewards
\cite{kaufmann2016complexity,Kevin_2017}. Comparing these two bounds,
we observe that KL-LUCB may offer benefits since
$D_i^* =D^* (\mu_1 ,\mu_i) \geq (\mu_1 -\mu_i)^2 /2 = \Delta_i^2/2$, but lil-UCB has
better logarithmic dependence on the $\Delta_i^2$ and no explicit
dependence on the number of arms $n$. Our new algorithm lil-KLUCB
offers the best of both worlds, providing a sample complexity that
scales essentially like
$$\sum_{i\geq 2} (D_i^*)^{-1} \log (\delta^{-1} \log (D^*_i)^{-1}) \
. $$ The key to this result is a novel anytime confidence bound for
sums of bounded random variables, which requires a significant departure from previous analyses of KL-based
confidence bounds.  

The practical benefit of lil-KLUCB is illustrated in terms of the New
Yorker Caption Contest problem \cite{cnet}.  The goal of that
crowdsourcing task is to identify the funniest cartoon caption from a
batch of $n\approx 5000$ captions submitted to the contest each
week. The crowd provides ``3-star'' ratings for the captions, which
can be mapped to $\{0,1/2,1\}$, for example.  Unfortunately, many of
the captions are not funny, getting average ratings close to $0$ (and
consequently very small variances).  This fact, however, is ideal for
KL-based confidence intervals, which are significantly tighter than
those based on sub-Gaussianity and the worst-case variance of $1/4$.
Compared to existing methods, the lil-KLUCB algorithm better addresses
the two key features in this sort of application: (1) a very large
number of arms, and (2) bounded reward distributions which, in many
cases, have very low variance.  In certain instances, this can have a
profound effect on sample complexity (e.g., $O(n^2)$ complexity for
algorithms using sub-Gaussian bounds vs.\ $O(n\log n)$ for lil-KLUCB,
as shown in Table~\ref{tab:KL_vs_SG}).

The paper is organized as follows. Section~\ref{sec:main}
defines the best-arm identification problem, gives the
lil-KLUCB algorithm and states the main results. We also briefly review
related literature, and compare the performance of lil-KLUCB to that of previous algorithms.
Section~\ref{sec:anytime} provides the main technical contribution
of the paper, a novel anytime confidence bound for
sums of bounded random variables. Section~\ref{sec:lil-KLUCB}
analyzes the performance of the lil-KLUCB algorithm.
Section~\ref{sec:experiment} provides experimental support for the
lil-KLUCB algorithm using data from the New Yorker
Caption Contest.

%%%%%%%%%%%%%%%%%%%%%%%%%%%%%

\section{Problem Statement and Main Results}\label{sec:main}

Consider a MAB problem with $n$ arms.  We use the shorthand notation
$[n]:=\{1,\dots,n\}$. For every $i\in [n]$ let
$\{ X_{i,j} \}_{j\in \N}$ denote the reward sequence of arm~$i$, and
suppose that $\P (X_{i,j} \in [0,1])=1$ for all $i\in [n],\ j\in
\N$. Furthermore, assume that all rewards are independent, and that
$X_{i,j} \sim \P_i$ for all $j\in \N$. Let the mean reward of arm~$i$
be denoted by $\mu_i$ and assume w.l.o.g. that
$\mu_1>\mu_2 \geq \dots \geq \mu_n$.

We focus on the best-arm identification problem in the
fixed-confidence setting. At every time $t\in \N$ we are allowed to
select an arm to sample (based on past rewards) and observe the next
element in its reward sequence. Based on the observed rewards, we wish
to find the arm with the highest mean reward. In the fixed confidence
setting, we prescribe a probability of error $\delta \in (0,1)$ and
our goal is to construct an algorithm that finds the best arm with
probability at least $1-\delta$.  Among $1-\delta$ accurate
algorithms, one naturally favors those that require fewer
samples. Hence proving upper bounds on the sample complexity of a
candidate algorithm is of prime importance.

The lil-KLUCB algorithm that we propose is a fusion of
lil-UCB \cite{lilUCB_2014} and KL-LUCB \cite{KLUCB_2013_kaufmann}, and
its operation is essentially a special instance of
LUCB++ \cite{Kevin_2017}.  At each time step $t$, let $T_i(t)$ denote
the total number of samples drawn from arm $i$ so far, and let
$\hat{\mu}_{i,T_i(t)}$ denote corresponding empirical mean. The algorithm
is based on lower and upper confidence bounds of the following general
form: for each $i\in [n]$ and any $\epsilon \in (0,1)$
\begin{align*}
L_i(t,\epsilon ) & = \inf \left\{ m<\hat{\mu}_{i,T_i(t)} :\ D \left( \hat{\mu}_{i,T_i(t)},m \right) \leq \frac{c \log \left( \kappa \log_2 (2 T_i(t)) /\epsilon \right)}{T_i(t)} \right\}
 \\
U_i(t,\epsilon ) & = \sup \left\{ m>\hat{\mu}_{i,T_i(t)} :\ D \left( \hat{\mu}_{i,T_i(t)},m \right) \leq \frac{c \log \left( \kappa \log_2 (2 T_i(t)) /\epsilon \right)}{T_i(t)} \right\}
 \
\end{align*}
where $c$ and $\kappa$ are small constants (defined in the next
section).  These bounds are designed so that with probability at
least $1-\epsilon$, $L_i (T_i(t),\epsilon ) \ \leq \ \mu_i \ \leq \ U_i (T_i(t),\epsilon
)$ holds for all $t\in \N$. For any $t\in \N$ let $\TOP (t)$ be the index of the arm with
the highest empirical mean, breaking ties at random. With this
notation, we state
the lil-KLUCB algorithm and our main theoretical result. \ \\

\begin{figure}[h]
\fbox{\parbox[b]{4.5in}{{\underline{\bf lil-KLUCB}}  \\ \vspace{-.25in}
    \\
\begin{enumerate}
\item{\textbf{Initialize} by sampling every arm once.}
\item{\textbf{While} $L_{\TOP (t)} (T_{\TOP (t)}(t),\delta /(n-1)) \leq \displaystyle{\max_{i\neq \TOP (t)}} U_i (T_i(t),\delta )$ \textbf{do:}
\begin{itemize}
\item{Sample the following two arms:
\begin{itemize}
\item{$\TOP (t)$, and}
\item{$\arg \displaystyle{\max_{i \neq \TOP (t)}} U_i (T_i(t),\delta )$}
\end{itemize}
and update means and confidence bounds.}
\end{itemize}
}
\item{\textbf{Output} $\TOP (t)$}
\end{enumerate}
}}
\end{figure}

\begin{theorem}\label{thm:samplecomp}
  For every $i\geq 2$ let $\tilde{\mu}_i \in (\mu_i ,\mu_1 )$, and
  $\tilde{\mu}=\max_{i\geq 2} \tilde{\mu}_i$. With probability at
  least $1-2\delta$, lil-KLUCB returns the arm with the largest mean
  and the total number of samples it collects is upper bounded by
\[
\inf_{\tilde{\mu}_2,\dots ,\tilde{\mu}_n} \frac{c_0 \log \left( (n-1) \delta^{-1} \log D^* (\mu_1 ,\tilde{\mu})^{-1} \right)}{D^* (\mu_1 ,\tilde{\mu})} + \sum_{i\geq 2} \frac{c_0 \log \left( \delta^{-1} \log D^* (\mu_i ,\tilde{\mu}_i)^{-1} \right)}{D^* (\mu_i ,\tilde{\mu}_i )} \ ,
\]
where $c_0$ is some universal constant, $D^*(x,y)$ is the Chernoff-information.
\end{theorem}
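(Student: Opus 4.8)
The plan is the standard two-part scheme for fixed-confidence best-arm identification: build a ``good event'' $\mathcal{E}$ with $\P(\mathcal{E})\ge 1-2\delta$ on which (a) lil-KLUCB outputs arm~$1$ and (b) it halts within the stated budget. Concretely, take $\mathcal{E}$ to be the intersection of $\{\,\forall t:\ \mu_1\in[L_1(T_1(t),\delta/(n-1)),\,U_1(T_1(t),\delta)]\,\}$ with the $n-1$ events $\{\,\forall t:\ \mu_i\ge L_i(T_i(t),\delta/(n-1))\,\}$, $i=2,\dots,n$. By the anytime bound of Section~\ref{sec:anytime} the first event fails with probability at most $\delta+\delta/(n-1)$ and each of the remaining ones with probability at most $\delta/(n-1)$, so a union bound gives $\P(\mathcal{E}^c)\le 2\delta$ (up to a negligible $O(\delta/n)$ slack). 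Correctness on $\mathcal{E}$ is then immediate: were the algorithm to halt at time $t$ with $\TOP(t)=j\ne 1$, the stopping test would force $L_j(T_j(t),\delta/(n-1))>\max_{i\ne j}U_i(T_i(t),\delta)\ge U_1(T_1(t),\delta)$, while on $\mathcal{E}$ we have $L_j(T_j(t),\delta/(n-1))\le\mu_j<\mu_1\le U_1(T_1(t),\delta)$, a contradiction; hence the algorithm can only stop with $\TOP(t)=1$.

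The substance is the sample-complexity estimate, for which I would isolate a deterministic ``sufficient exploration'' lemma. Fix $\tilde{\mu}_2,\dots,\tilde{\mu}_n$ with $\tilde{\mu}_i\in(\mu_i,\mu_1)$ and put $\tilde{\mu}=\max_{i\ge 2}\tilde{\mu}_i$. From the explicit form of $L_i,U_i$ and the monotonicity of $m\mapsto D(\hat{\mu},m)$ on each side of $\hat{\mu}$, I would show that on $\mathcal{E}$: (i) once $T_1(t)\ge c_0\log\big((n-1)\delta^{-1}\log D^*(\mu_1,\tilde{\mu})^{-1}\big)/D^*(\mu_1,\tilde{\mu})$ one has $L_1(T_1(t),\delta/(n-1))>\tilde{\mu}$; and (ii) for each $i\ge 2$, once $T_i(t)\ge c_0\log\big(\delta^{-1}\log D^*(\mu_i,\tilde{\mu}_i)^{-1}\big)/D^*(\mu_i,\tilde{\mu}_i)$ one has $U_i(T_i(t),\delta)<\tilde{\mu}_i$. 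The Chernoff information $D^*$ appears here, rather than a one-sided divergence, because on $\mathcal{E}$ the empirical mean $\hat{\mu}_{i,T_i(t)}$ is known only up to a confidence-interval width of $\mu_i$, and the least favourable displacement for crossing the level $\tilde{\mu}_i$ puts $\hat{\mu}_{i,T_i(t)}$ at the Chernoff point $z^*\in(\mu_i,\tilde{\mu}_i)$ with $D(z^*,\mu_i)=D(z^*,\tilde{\mu}_i)=D^*(\mu_i,\tilde{\mu}_i)$: pushing the empirical threshold $c\log(\kappa\log_2(2s)/\epsilon)/s$ strictly below this value costs exactly the stated number of samples. Keeping the factor $n-1$ inside the logarithm in (i) but not in (ii) is the LUCB++ accounting of~\cite{Kevin_2017}: the optimal arm must be separated from all $n-1$ competitors simultaneously, whereas each individual suboptimal arm need only be controlled at level~$\delta$, which is what removes the $\log n$ factor present in KL-LUCB.

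Granting the lemma, the remainder is bookkeeping. Each round samples exactly two arms, $\TOP(t)$ and the challenger $b_t:=\arg\max_{i\ne\TOP(t)}U_i(T_i(t),\delta)$, so the total number of samples equals $n$ plus twice the number of rounds. On $\mathcal{E}$, once arm~$1$ has reached the budget in (i) and a suboptimal arm $i$ the budget in (ii) we get $L_1(T_1(t),\delta/(n-1))>\tilde{\mu}\ge\tilde{\mu}_i>U_i(T_i(t),\delta)$, so once every arm has reached its budget the stopping test fires; and one checks, again from $\mathcal{E}$, that whenever $\TOP(t)\ne 1$ or $b_t=i$ the arm pulled that round is still below its budget, so no arm is over-pulled by more than a constant factor of its threshold. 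Summing the budgets and taking the infimum over $\tilde{\mu}_2,\dots,\tilde{\mu}_n$ yields the claimed bound. The main obstacle is the deterministic lemma (i)--(ii): one must pin down how the novel anytime threshold of Section~\ref{sec:anytime} interacts with the asymmetry of $D(\cdot,\cdot)$ near the endpoints $0$ and $1$, and verify that the resulting sample sizes are governed by $D^*$ up to universal constants; by comparison the counting argument and the union bound are routine once (i)--(ii) are established.
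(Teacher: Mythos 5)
Your overall architecture (a good event for correctness, a deterministic sufficient-exploration step driven by the Chernoff information, LUCB bookkeeping) matches the paper's, and your correctness argument and claim \emph{(i)} for arm~1 are fine. The genuine gap is in claim \emph{(ii)}: your good event $\mathcal{E}$ cannot deliver per-arm terms of the form $\log(\delta^{-1}\log D^*(\mu_i,\tilde{\mu}_i)^{-1})$ with no $n$-dependence. To force $U_i(T_i(t),\delta)<\tilde{\mu}_i$ you need control of the \emph{upward} deviations of $\hat{\mu}_{i,T_i(t)}$, i.e.\ an event of the form $\{\forall t:\ D(\hat{\mu}_{i,t},\mu_i)\le f_t(\epsilon_i)\ \text{whenever}\ \hat{\mu}_{i,t}>\mu_i\}$; in your $\mathcal{E}$ this is exactly the event $\{\mu_i\ge L_i(\cdot,\delta/(n-1))\}$, held at level $\epsilon_i=\delta/(n-1)$. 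The Chernoff-information argument (the paper's Lemma~\ref{lem:stop}) then only guarantees $U_i<\tilde{\mu}_i$ once $f_{T_i(t)}(\delta\cdot\delta/(n-1))<D^*(\mu_i,\tilde{\mu}_i)$, which costs on the order of $\log\bigl((n-1)\delta^{-2}\log D^*(\mu_i,\tilde{\mu}_i)^{-1}\bigr)/D^*(\mu_i,\tilde{\mu}_i)$ samples --- i.e.\ you recover the KL-LUCB bound with a $\log n$ in every term, not the claimed bound. If instead you put each suboptimal arm's upward-deviation event at level $\delta$ to kill the $\log n$, the union bound over the $n-1$ arms costs $(n-1)\delta$ and the $1-2\delta$ guarantee is lost. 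Your remark that ``each individual suboptimal arm need only be controlled at level $\delta$'' asserts the conclusion without supplying the mechanism.

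The paper's resolution --- the one nontrivial ingredient your proposal is missing --- is to make the per-arm confidence level \emph{random}: set $\delta_i=\sup\{\epsilon>0:\ U_i(f_t(\epsilon))\ge\mu_i\ \forall t\}$, the best level that happens to hold on arm $i$'s realized sample path; apply the deterministic Lemma~\ref{lem:stop} with this $\delta_i$ to get the pathwise pull-count bound $\xi_i(\delta\cdot\delta_i)\lesssim \bigl(K_1\log(\delta^{-1}\log D^*(\mu_i,\tilde{\mu}_i)^{-1})+\log\delta_i^{-1}\bigr)/D^*(\mu_i,\tilde{\mu}_i)$ with no union bound at all; and then observe that $\P(\delta_i<\gamma)\le\gamma$ by Theorem~\ref{thm:anytime_2}, so that the quantities $\log\delta_i^{-1}/D^*(\mu_i,\tilde{\mu}_i)$ are independent sub-exponential random variables whose sum is $O\bigl(\sum_{i\ge2}\log\delta^{-1}/D^*(\mu_i,\tilde{\mu}_i)\bigr)$ with probability $1-\delta$. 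Without this step (or an equivalent device) the stated theorem does not follow from your event $\mathcal{E}$. A minor secondary point: your closing claim that ``no arm is over-pulled by more than a constant factor of its threshold'' is better replaced by the observation that on the good event arm $i$ is simply never selected again once $U_i<\tilde{\mu}_i$ and $L_1>\tilde{\mu}$, so the total count is at most $\xi_1+\sum_{i\ge2}\xi_i$ exactly.
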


\begin{remark}\label{rem:best-arm_vs_top-k}
Note that the LUCB++ algorithm of \cite{Kevin_2017} is general enough to handle identification of the top $k$ arms (not just the best-arm). All arguments presented in this paper also go through when considering the top-$k$ problem for $k>1$. However, to keep the arguments clear and concise, we chose to focus on the best-arm problem only.
\end{remark}

\subsection{Comparison with previous work}\label{sec:comparison}

We now compare the sample complexity of lil-KLUCB to that
of the two most closely related algorithms, KL-LUCB
\cite{KLUCB_2013_kaufmann} and lil-UCB \cite{lilUCB_2014}. For a
detailed review of the history of MAB problems and the use of KL-confidence intervals for bounded rewards, we refer the
reader to \cite{cappe2013kullback, maillard2011finite,
  KLUCB_2011_Garivier}.

For the KL-LUCB algorithm, Theorem~3 of \cite{KLUCB_2013_kaufmann}
guarantees a high-probability sample complexity upper bound scaling as
\[
\inf_{c\in (\mu_1,\mu_2 )} \sum_{i\geq 1} (D^* (\mu_i ,c))^{-1} \log \left( n \delta^{-1} (D^* (\mu_i ,c))^{-1} \right) \ .
\]
Our result improves this in two ways. On one hand, we eliminate the
unnecessary logarithmic dependence on the number of arms $n$ in every
term. Note that the $\log n$ factor still appears in
Theorem~\ref{thm:samplecomp} in the term corresponding to the number
of samples on the best arm. It is shown in \cite{Kevin_2017} that this
factor is indeed unavoidable. The other improvement lil-KLUCB offers
over KL-LUCB is improved logarithmic dependence on the
Chernoff-information terms. This is due to the tighter confidence intervals derived in Section~\ref{sec:anytime}.

Comparing Theorem~\ref{thm:samplecomp} to the sample complexity of
lil-UCB, we see that the two are of the same form, the exception being
that the Chernoff-information terms take the place of the squared
mean-gaps (which arise due to the use of sub-Gaussian (SG) bounds). To
give a sense of the improvement this can provide, we compare the
sums\footnote{Consulting the proof of Theorem~\ref{thm:samplecomp} it
  is clear that the number of samples on the sub-optimal arms of
  lil-KLUCB scales essentially as $S_{\KL}$ w.h.p. (ignoring doubly logarithmic
  terms), and a similar argument can be made about lil-UCB. This
  justifies considering these sums in order to compare lil-KLUCB and
  lil-UCB.} 
\[
S_{\KL} = \sum_{i\geq 2} \frac{1}{D^* (\mu_i ,\mu_1)} \ \ \textrm{and} \
\ S_{\textrm{SG}} = \sum_{i\geq 2} \frac{1}{\Delta_i^2}  \  .
\]
Let $\mu ,\mu' \in (0,1),\ \mu < \mu'$ and $\Delta = |\mu - \mu'|$. Note that the Chernoff-information between $\Ber (\mu )$ and $\Ber (\mu' )$ can be expressed as
\[
D^* (\mu ,\mu' ) = \max_{x\in [\mu ,\mu' ]} \min \{ D(x,\mu ),D(x,\mu' ) \}  = D(x^*,\mu ) = D(x^* ,\mu') = \tfrac{D(x^* ,\mu )+D(x^* ,\mu' )}{2} \ ,
\]
for some unique $x^* \in [\mu ,\mu' ]$. Hence it follows that
$$D^* (\mu ,\mu' ) \ \geq \ \min_{x\in [\mu ,\mu' ]} \frac{D(x,\mu ) + D(x,\mu' )}{2} 
\ = \ \log \frac{1}{\sqrt{\mu (\mu +\Delta )} + \sqrt{(1-\mu )(1-\mu
    -\Delta )}} \ . $$ Using this with every term in $S_{\KL}$ gives
us an upper bound on that sum.  If the means are all bounded well away
from $0$ and $1$, then $S_{\KL}$ may not differ that much from
$S_{\textrm{SG}}$. There are some situations however, when the two
expressions behave radically differently. As an example, consider a
situation when $\mu_1 =1$. In this case we get
\[
S_{\KL} \leq \sum_{i\geq 2} \frac{2}{\log \tfrac{1}{1-\Delta_i}}  \leq  2 \sum_{i\geq 2} \frac{1}{\Delta_i} \ll \sum_{i\geq 2} \frac{1}{\Delta_i^2}  =  S_{\textrm{SG}} \ .
\]
% To quantify this, suppose that $\Delta_i = (i/n)^\alpha$. Then we have
% \begin{align*}
% S_{\KL} & \approx n \approx S_{\textrm{SG}} \ \textrm{when $\alpha \in (0,1/2)$,} \\
% S_{\KL} & \approx n \ll n \log n \approx S_{\textrm{SG}} \ \textrm{when $\alpha =1/2$,} \\
% S_{\KL} & \approx n \ll n^{2\alpha} \approx S_{\textrm{SG}} \ \textrm{when $\alpha \in (1/2 ,1)$,} \\
% S_{\KL} & \approx n \log n \ll n^2 \approx S_{\textrm{SG}} \ \textrm{when $\alpha =1$,} \\
% S_{\KL} & \approx n^\alpha \ll n^{2\alpha} \approx S_{\textrm{SG}} \ \textrm{when $\alpha >1$.} 
% \end{align*}
Table~\ref{tab:KL_vs_SG} illustrates the difference between the scaling of the sums $S_{\KL}$ and $S_{\textrm{SG}}$ when the gaps have the parametric form $\Delta_i = (i/n)^\alpha$.

\begin{table}[h]
  \caption{$S_{\KL}$ versus $S_{\textrm{SG}}$ for mean gaps $\Delta_i
    = (\frac{i}{n})^\alpha, \ i=1,\dots,n$}
  \label{tab:KL_vs_SG}
  \centering
  \begin{tabular}{l|ccccc}
    \toprule
    $\alpha$ & $\in (0,1/2)$ & $1/2$ & $\in (1/2,1)$ & $1$ & $\in (1,\infty )$ \\
    \midrule
    $S_{\KL}$ & $n$ & $n$ & $n$ & $n \log n$ & $n^\alpha$ \\
    $S_{\textrm{SG}}$ & $n$ & $n \log n$ & $n^{2\alpha}$ & $n^2$ & $n^{2\alpha}$ \\
    \bottomrule
  \end{tabular}
\end{table}

We see that
KL-type confidence bounds can sometimes provide a significant
advantage in terms of the sample complexity. Intuitively, the gains
will be greatest when many of the means are close to 0 or 1 (and hence
have low variance). We will illustrate in Section~\ref{sec:experiment}
that such gains often also manifest in practical applications like the
New Yorker Caption Contest problem.

%%%%%%%%%%%%%%%%%%%%%%%%%%%%%

\section{Anytime Confidence Intervals for Sums of Bounded Random Variables}\label{sec:anytime}

The main step in our analysis is proving a sharp anytime
confidence bound for the mean of bounded random variables. These
will be used to show, in Section~\ref{sec:lil-KLUCB}, that lil-KLUCB
draws at most $O( (D_i^*)^{-1} \log \log (D^*_i)^{-1})$ samples from a
suboptimal arm $i$, where $D_i^* := D^* (\mu_1 ,\mu_i)$ is the
Chernoff-information between a $\Ber (\mu_1 )$ and a $\Ber (\mu_i )$
random variable and arm $1$ is the arm with the largest mean.
The iterated log factor is a necessary consequence of the
law-of-the-iterated logarithm \cite{lilUCB_2014}, and in it is in this
sense that we call the bound sharp. Prior work on MAB algorithms based
on KL-type confidence bounds
\cite{KLUCB_2011_Garivier,maillard2011finite,cappe2013kullback} did
not focus on deriving tight anytime confidence bounds. 
% In contrast, our new and streamlined method of proof for KL confidence bounds enables us to derive sharp anytime confidence bounds that are the key to our fixed-confidence lil-KLUCB algorithm.
% The main idea to prove the new tight concentration bounds is to combine standard techniques (Chernoff's bound and Doob's inequality) with tight fixed-time concentration inequalities for bounded random variables.

Consider a sequence of iid random variables $Y_1, Y_2, \dots$ that are bounded in $[0,1]$ and have mean $\mu$. Let $\hat{\mu}_t = \frac{1}{t}\sum_{j\in [t]} Y_j$ be the empirical mean of the observations up to time $t\in \N$. % In the following result we prove an anytime concentration result for the empirical mean.

\begin{theorem}\label{thm:anytime_1}
Let $\mu \in [0,1]$ and $\delta \in (0,1)$ be arbitrary. Fix any $l\geq 0$ and set $N =2^l$, and define
\[
\kappa (N ) = \delta^{1/(N+1)} \left( \sum_{t\in [N]} \1_{\{ l\neq 0 \}} \log_2 (2t)^{-\frac{N+1}{N}} + N \sum_{k\geq l} (k+1)^{-\frac{N+1}{N}} \right)^{\frac{N}{N+1}} \ .
\]
\begin{itemize}
\item[(i)]{Define the sequence $z_t \in (0,1-\mu],\ t\in \N$ such that
\begin{equation}\label{eqn:def_z_t}
D \left( \mu + \tfrac{N}{N+1} z_t,\mu \right) = \frac{\log \left( \kappa (N ) \log_2 (2t) /\delta \right)}{t} \ ,
\end{equation}
if a solution exists, and $z_t =1-\mu$ otherwise. Then $\P \left( \exists t\in \N :\ \hat{\mu}_t - \mu > z_t \right) \leq \delta$.
}
\item[(ii)]{
Define the sequence $z_t >0,\ t\in \N$ such that
\[
D \left( \mu - \tfrac{N}{N+1} z_t,\mu \right) = \frac{\log \left( \kappa (N ) \log_2 (2t) /\delta \right)}{t} \ ,
\]
if a solution exists, and $z_t =\mu$ otherwise. Then $\P \left( \exists t\in \N :\ \hat{\mu}_t - \mu < -z_t \right) \leq \delta$.
}
\end{itemize}
\end{theorem}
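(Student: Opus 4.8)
The plan is to run a Cram\'er--Chernoff argument with an exponential supermartingale, combined with a peeling scheme tuned so that the convexity of $r\mapsto D(\mu+r,\mu)$ turns each piece's failure probability into a power strictly larger than $1$ of a summable weight. First I would reduce to Bernoulli tails: since $r\mapsto e^{\lambda r}$ is convex, for $\lambda\ge 0$ one has $\E[e^{\lambda Y_j}]\le 1-\mu+\mu e^{\lambda}=:e^{\Lambda(\lambda)}$, so $S^{\lambda}_t:=\exp\big(\lambda\sum_{j\le t}Y_j-t\Lambda(\lambda)\big)$ is a nonnegative supermartingale with $\E S^{\lambda}_0=1$. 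Ville's maximal inequality then gives $\P(\exists t:\ S^{\lambda}_t\ge c)\le 1/c$ for any $c>0$, and optimising over $\lambda\ge 0$ converts this into one-sided KL tails because the Legendre transform of $\Lambda$ equals $D(\cdot\,,\mu)$ on $[\mu,1]$.

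Next comes the peeling. I would cover $\N$ by ``blocks'': the singletons $\{1\},\dots,\{N\}$ (needed only when $l\neq 0$), and, for $t>N$, intervals $[t_{\min},t_{\max}]$ with ratio $t_{\max}/t_{\min}\le (N{+}1)/N$. Since $(1+1/N)^{N}\ge 2$, each dyadic range $[2^{k},2^{k+1})$ with $k\ge l$ is covered by at most $N$ such blocks, each with $t_{\min}\ge 2^{k}$, hence $\log_2(2t_{\min})\ge k+1$. For the deviation event inside a single block I use the single-$\lambda$ inequality above: the defining equation \eqref{eqn:def_z_t} together with convexity of $D$ and $D(\mu,\mu)=0$ forces $t\mapsto t z_t$ to be nondecreasing, so on $\{\exists t\in[t_{\min},t_{\max}]:\ \hat\mu_t-\mu>z_t\}$ the offending $t$ satisfies $\sum_{j\le t}Y_j> t\mu+t_{\min}z_{t_{\min}}$; feeding this into $S^{\lambda}_t$, using $t\le t_{\max}$ and $\lambda\mu-\Lambda(\lambda)\le 0$, and optimising $\lambda\ge 0$ bounds the block's failure probability by $\exp\big(-t_{\max}D(\mu+a/t_{\max},\mu)\big)$ with $a:=t_{\min}z_{t_{\min}}$.

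The crucial quantitative step is to lower bound this exponent. Here $a/t_{\max}=z_{t_{\min}}(t_{\min}/t_{\max})\in[\tfrac{N}{N+1}z_{t_{\min}},\,z_{t_{\min}}]$, and the map $\rho\mapsto \rho\,D\big(\mu+z_{t_{\min}}/\rho,\mu\big)$ is nonincreasing on $[1,\infty)$ by the tangent-from-origin bound $D(\mu+r,\mu)\le r\,\partial_r D(\mu+r,\mu)$ (again convexity plus $D(\mu,\mu)=0$). Consequently $t_{\max}D(\mu+a/t_{\max},\mu)\ge \tfrac{N+1}{N}\,t_{\min}D\big(\mu+\tfrac{N}{N+1}z_{t_{\min}},\mu\big)=\tfrac{N+1}{N}\log\big(\kappa(N)\log_2(2t_{\min})/\delta\big)$ by \eqref{eqn:def_z_t}, so each block fails with probability at most $\big(\delta/(\kappa(N)\log_2(2t_{\min}))\big)^{(N+1)/N}$, and each singleton $\{t\}$ is the $\rho=1$ instance of the same estimate. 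A union bound over all blocks, bounding $\log_2(2t_{\min})\ge k+1$ and counting at most $N$ blocks per dyadic range, yields a total of at most $\big(\delta/\kappa(N)\big)^{(N+1)/N}\Big(\1_{\{l\neq 0\}}\sum_{t\in[N]}\log_2(2t)^{-\frac{N+1}{N}}+N\sum_{k\ge l}(k+1)^{-\frac{N+1}{N}}\Big)$, which is exactly $\delta$ for the stated $\kappa(N)$. Part (ii) follows by applying (i) to $1-Y_1,1-Y_2,\dots$ (mean $1-\mu$), using $D(x,y)=D(1-x,1-y)$.

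The main obstacle is the balancing in the last step: the slack factor $\tfrac{N}{N+1}$ in \eqref{eqn:def_z_t} and the geometric block ratio $\tfrac{N+1}{N}$ must be matched so that the convexity estimate produces the exponent $\tfrac{N+1}{N}>1$ rather than $1$ --- with exponent $1$ the series $\sum_k (k+1)^{-1}$ diverges and no time-uniform bound survives. Establishing monotonicity of $t\mapsto t z_t$ and handling the degenerate cases ($l=0$, or \eqref{eqn:def_z_t} having no solution so that $z_t=1-\mu$ resp.\ $z_t=\mu$) is the remaining bookkeeping.
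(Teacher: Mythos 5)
Your proposal is correct and follows essentially the same route as the paper's proof: the same dyadic-plus-$N$-fold peeling with block ratio $(N+1)/N$, the same reduction to the Bernoulli moment generating function, Doob/Ville's maximal inequality, the monotonicity of $t\mapsto t z_t$, and the same convexity-through-the-origin estimate (your monotonicity of $\rho\mapsto\rho\,D(\mu+z/\rho,\mu)$ is exactly the paper's $\alpha D(\mu+x,\mu)\ge D(\mu+\alpha x,\mu)$) to extract the exponent $\tfrac{N+1}{N}$ that makes the union bound sum to $\delta$. The only cosmetic difference is your explicit reduction of part (ii) via $Y\mapsto 1-Y$, where the paper simply declares the two parts analogous.
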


The result above can be used to construct anytime confidence bounds for the mean as follows. Consider part \emph{(i)} of Theorem~\ref{thm:anytime_1} and fix $\mu$. The result gives a sequence $z_t$ that upper bounds the deviations of the empirical mean. It is defined through an equation of the form $D(\mu+ N z_t/(N+1),\mu)=f_t$. Note that the arguments of the function on the left must be in the interval $[0,1]$, in particular $Nz_t /(N+1)<1-\mu$, and the maximum of $D(\mu +x,\mu)$ for $x>0$ is $D(1,\mu)=\log \mu^{-1}$. Hence, equation \ref{eqn:def_z_t} does not have a solution if $f_t$ is too large (that is, if $t$ is small). In these cases we set $z_t =1-\mu$. However, since $f_t$ is decreasing, equation \ref{eqn:def_z_t} does have a solution when $t\geq T$ (for some $T$ depending on $\mu$), and this solution is unique (since $D(\mu +x,\mu )$ is strictly increasing).

With high probability $\hat{\mu}_t -\mu \leq z_t$ for all
$t\in \N$ by Theorem~\ref{thm:anytime_1}. Furthermore, the function $D(\mu +x,\mu )$ is
increasing in $x\geq 0$. By combining these facts we get that with
probability at least $1-\delta$
\[
D \left( \mu +\tfrac{N}{N+1} z_t ,\mu \right) \ \geq \ D \left( \tfrac{N
    \hat{\mu}_t +\mu}{N+1} ,\mu \right) \ .
\]
On the other hand
\[
D \left( \mu +\tfrac{N}{N+1} z_t ,\mu \right) \ \leq \ \frac{\log \left( \kappa (N) \log_2 (2t) /\delta \right)}{t} \ ,
\]
by definition. Chaining these two inequalities leads to the lower
confidence bound 
\begin{equation}\label{lcb}
L(t,\delta ) = \inf \left\{ m<\hat{\mu}_t :\ D \left( \tfrac{N \hat{\mu}_t +m}{N+1} ,m \right) \leq \frac{\log \left( \kappa (N) \log_2 (2t) /\delta \right)}{t} \right\}
\end{equation}
which holds for all times $t$ with probability at least
$1-\delta$. Considering the left deviations of $\hat{\mu}_t -\mu$ we can get an upper confidence bound in a similar manner:
\begin{equation}\label{ucb}
U(t,\delta ) = \sup \left\{ m>\hat{\mu}_t :\ D \left( \tfrac{N \hat{\mu}_t +m}{N+1} ,m \right) \leq \frac{\log \left( \kappa (N) \log_2 (2t) /\delta \right)}{t} \right\} \ .
\end{equation}
That is, for all times $t$, with probability at least $1-2\delta$ we have
$L(t,\delta ) \ \leq \ \hat{\mu}_t \ \leq \ U(t,\delta ).$

Note that the constant $\log \kappa(N) \approx 2 \log_2(N)$, so the
choice of $N$ plays a relatively mild role in the bounds. However, we note here that if $N$ is
sufficiently large, then
$\frac{N \hat{\mu}_t +m}{N+1} \approx \hat{\mu}_t$, and thus
$D \left( \frac{N \hat{\mu}_t +m}{N+1} ,m \right) \approx D \left(
  \hat{\mu}_t,m \right)$, in which case the bounds above are easily
compared to those in prior works
\cite{KLUCB_2011_Garivier,maillard2011finite,cappe2013kullback}. We
make this connection more precise and show that the confidence intervals defined as
\begin{align*}
L'(t,\delta ) & = \inf \left\{ m<\hat{\mu}_t :\ D \left( \hat{\mu}_t,m \right) \leq \frac{c(N) \log \left( \kappa (N) \log_2 (2t) /\delta \right)}{t} \right\}
 \ , \textrm{ and} \\
U'(t,\delta ) & = \inf \left\{ m>\hat{\mu}_t :\ D \left( \hat{\mu}_t,m \right) \leq \frac{c(N) \log \left( \kappa (N) \log_2 (2t) /\delta \right)}{t} \right\}
 \ ,
\end{align*}
satisfy $L'(t,\delta ) \ \leq \ \hat{\mu}_t \ \leq \ U'(t,\delta )$ for all $t$, with probability $1-2\delta$. The constant $c(N)$ is defined in Theorem~\ref{thm:anytime_2} in the Supplementary Material, where the correctness of $L'(t,\delta )$ and $U'(t,\delta )$ is shown.

\begin{proof}[Proof of Theorem~\ref{thm:anytime_1}]
The proofs of parts \emph{(i)} and \emph{(ii)} are completely analogous, hence in what follows we only prove part \emph{(i)}. Note that $\{ \hat{\mu}_t - \mu >z_t \} \iff \{ S_t >tz_t \}$, where $S_t = \sum_{j\in [t]} (Y_j -\mu )$ denotes the centered sum up to time $t$. We start with a simple union bound
\begin{equation}\label{eqn:union_bound}
\P \left( \exists t\in \N :\ S_t > t z_t \right) \leq \P \left( \exists t\in [N]:\ S_t > tz_t \right) + \sum_{k\geq l} \P \left( \exists t\in [2^k ,2^{k+1}]:\ S_t > tz_t \right) \ .
\end{equation}
First, we bound each summand in the second term individually. In an effort to save space, we define the event $A_k= \{ \exists t\in [2^k ,2^{k+1}]:\ S_t > tz_t \}$. Let $t_{j,k} = (1+\frac{j}{N})2^k$. In what follows we use the notation $t_j \equiv t_{j,k}$. We have
\begin{align*}
\P \left( A_k \right) & \leq \sum_{j\in [N]} \P \left( \exists t\in
                        [t_{j-1},t_{j} ]:\ S_t > t z_t \right) \ \leq
                        \ \sum_{j\in [N]} \P \left( \exists t\in [t_{j-1},t_{j}]:\ S_t > t_{j-1}z_{t_{j-1}} \right) \ ,
\end{align*}
where the last step is true if $tz_t$ is non-decreasing in $t$. This technical claim is formally shown in Lemma~\ref{lem:tzt_increasing} in the Supplementary Material. However, to give a short heuristic, it is easy to see that $tz_t$ has an increasing lower bound. Noting that $D(\mu +x,\mu )$ is convex in $x$ (the second derivative is positive), and that $D(\mu ,\mu )=0$, we have $D(1,\mu ) x\geq D(\mu +x,\mu )$. Hence $z_t \gtrsim t^{-1} \log \log t$.

Using a Chernoff-type bound together with Doob's inequality, we can continue as
\begin{align}\label{eqn:kl_to_subgauss_2}
\P \left( A_k \right) & \leq \inf_{\lambda >0} \sum_{j\in [N]} \P \left( \exists t\in [t_{j-1},t_{j}]:\ \exp \left( \lambda S_t \right) > \exp \left( \lambda t_{j-1} z_{t_{j-1}} \right) \right) \nonumber \\
& \leq \sum_{j\in [N]} \exp \left( -\sup_{\lambda >0} \left( \lambda t_{j-1} z_{t_{j-1}} - \log \E \left( e^{\lambda S_{t_{j}}} \right) \right) \right) \nonumber \\
& = \sum_{j\in [N]} \exp \left( -t_{j} \sup_{\lambda \geq 0} \left( \lambda \tfrac{N+j-1}{N+j} z_{t_{j-1}} - \log \E \left( e^{\lambda (Y_1 -\mu )} \right) \right) \right) \ .
\end{align}

Using $\E (e^{\lambda Y_1}) \leq \E (e^{\lambda \xi})$ where $\xi \sim \Ber (\mu )$ (see Lemma~9 of \cite{KLUCB_2011_Garivier}), and the notation $\alpha_j= \tfrac{N+j-1}{N+j}$,
\begin{align}\label{eqn:kl_to_subgauss}
\P \left( A_k \right) & \leq \sum_{j\in [N]} \exp \left( -t_{j} \sup_{\lambda \geq 0} \left( \lambda \alpha_j z_{t_{j-1}} - \log \E \left( e^{\lambda (\xi -\mu )} \right) \right) \right) \nonumber \\
& = \sum_{j\in [N]} \exp \left( -t_{j} D \left( \mu + \alpha_j z_{t_{j-1}},\mu \right) \right) \ ,
\end{align}
since the rate function of a Bernoulli random variable can be explicitly computed, namely we have $\sup_{\lambda >0} (\lambda x - \log \E (e^{\lambda \xi})) = D(\mu +x,\mu )$ (see \cite{Concentration_2013}).

Again, we use the convexity of $D(\mu +x,\mu)$. For any $\alpha \in (0,1)$ we have $\alpha D(\mu +x,\mu ) \geq D(\mu +\alpha x,\mu )$, since $D(\mu ,\mu )=0$. Using this with $\alpha = \tfrac{N}{\alpha_j (N+1)}$ and $x= \alpha_j z_{t_{j-1}}$, we get that
\[
\tfrac{N}{\alpha_j (N+1)} D \left( \mu + \alpha_j z_{t_{j-1}},\mu \right) \geq D \left( \mu +\tfrac{N}{N+1} z_{t_{j-1}},\mu \right) \ .
\]

This implies
\begin{equation}\label{eqn:key_to_thm2}
\P \left( A_k \right) \leq \sum_{j\in [N]} \exp \left( -t_{j} \tfrac{N+1}{N} \alpha_j D \left( \mu + \tfrac{N}{N+1} z_{t_{j-1} },\mu \right) \right) \ .
\end{equation}

Plugging in the definition of $t_j$ and the sequence $z_t$, and noting that $\delta <1$, we arrive at the bound
\[
\P \left( A_k \right) \leq \sum_{j\in [N]} \exp \left( - \frac{N+1}{N} \log \left( \kappa (N ) \log_2 (2^{k+1} \frac{N+j-1}{N}) /\delta \right) \right) \leq N \left( \tfrac{\delta}{\kappa(N) (k+1)} \right)^{\frac{N+1}{N}} \ .
\]

Regarding the first term in (\ref{eqn:union_bound}), again using the Bernoulli rate function bound we have
\[
\P \left( \exists t\in [N]:\ \hat{\mu}_t - \mu > z_t \right) \leq \sum_{t\in [N]} \P \left( \hat{\mu}_t - \mu > z_t \right) \leq \sum_{t\in [N]} \exp \left( -t D(\mu +z_t,\mu ) \right) \ .
\]

Using the convexity of $D(\mu +x,\mu )$ as before, we can continue as
\begin{align*}
\P \left( \exists t\in [N]:\ \hat{\mu}_t - \mu > z_t \right) & \leq \sum_{t\in [N]} \exp \left( -t \tfrac{N+1}{N} D\left( \mu +\tfrac{N}{N+1} z_t,\mu \right) \right) \\
& \leq \sum_{t\in [N]} \exp \left( - \tfrac{N+1}{N} \log \left( \kappa (N) \log_2 (2t) /\delta \right) \right) \\
& \leq \delta^{\frac{N+1}{N}} \kappa (N)^{-\frac{N+1}{N}} \sum_{t\in [N]} \log_2 (2t)^{-\frac{N+1}{N}} \ .
\end{align*}

Plugging the two bounds back into (\ref{eqn:union_bound}) we conclude that
\[
\P \left( \exists t:\ \hat{\mu}_t - \mu > z_t \right) \leq \delta^{\frac{N+1}{N}} \kappa (N)^{-\frac{N+1}{N}} \sum_{j\in [N]} \left( \1_{\{ l\neq 0 \}} \log_2 (2j)^{-\frac{N+1}{N}} + \sum_{k\geq l} (k+1)^{-\frac{N+1}{N}} \right) \leq \delta \ ,
\]
by the definition of $\kappa (N)$.
\end{proof}

%%%%%%%%%%%%%%%%%%%%%%%%%%%%%

\section{Analysis of lil-KLUCB}\label{sec:lil-KLUCB}

Recall that the lil-KLUCB algorithm uses confidence bounds of the form $U_i (t,\delta) = \sup \{ m>\hat{\mu}_t : D(\hat{\mu}_t ,m) \leq f_t(\delta ) \}$ with some decreasing sequence $f_t(\delta )$. In this section we make this dependence explicit, and use the notations $U_i (f_t(\delta ))$ and $L_i (f_t(\delta ))$ for upper and lower confidence bounds.
For any $\epsilon >0$ and $i\in [n]$, define the events $\Omega_i (\epsilon ) = \{ \forall t\in \N :\ \mu_i \in [L_i (f_t(\epsilon )),U_i (f_t(\epsilon ))] \}$.

The correctness of the algorithm follows from the correctness of the
individual confidence intervals, as is usually the case with LUCB
algorithms. This is shown formally in
Proposition~\ref{prop:correctness} provided in the Supplementary
Materials.  The main focus in this section is to show a high
probability upper bound on the sample complexity. This can be done by
combining arguments frequently used for analyzing LUCB algorithms and
those used in the analysis of the lil-UCB \cite{lilUCB_2014}. The
proof is very similar in spirit to that of the LUCB++ algorithm
\cite{Kevin_2017}. Due to spatial restrictions, we only provide a proof
sketch here, while the detailed proof is provided in the Supplementary
Materials.

\begin{proof}[Proof sketch of Theorem~\ref{thm:samplecomp}]
  Observe that at each time step two things can happen (apart from
  stopping): \emph{(1)} Arm~1 is not sampled (two sub-optimal arms are
  sampled); \emph{(2)} Arm~1 is sampled together with some other
  (suboptimal) arm.  Our aim is to upper bound the number of times any
  given arm is sampled for either of the reasons above. We do so
  by conditioning on the event
\[
\Omega' = \Omega_1 (\delta ) \cap \left( \bigcap_{i\geq 2} \Omega_i
  (\delta_i ) \right) \ , \ \mbox{for a certain choice of $\{\delta_i\}$
  defined below.} 
\]
For instance, if arm~$1$ is not sampled at a given time $t$, we know
that $\TOP (t) \neq 1$, which means there must be an arm $i\geq 2$ such
that $U_i (T_i(t),\delta )\geq U_1(T_1(t),\delta )$. However, on the
event $\Omega_1 (\delta )$, the UCB of arm~1 is accurate, implying
that $U_i(T_i(t),\delta )\geq \mu_1$. This implies that $T_i(t)$ can
not be too big, since on $\Omega_i (\delta_i )$, $\hat{\mu}_{i,t}$ is
``close" to $\mu_i$, and also $U_i(T_i(t),\delta )$ is not much larger
then $\hat{\mu}_i$. All this is made formal in Lemma~\ref{lem:stop},
yielding the following upper bound on number of times arm~$i$ is sampled for
reason \emph{(1)}:
\[
\tau_i (\delta \cdot \delta_i ) = \min \left\{ t\in \N :\ f_t (\delta \cdot \delta_i ) < D^* (\mu_i ,\mu_1 ) \right\} \ .
\]

Similar arguments can be made about the number of samples of any
suboptimal arm~$i$ for reason~\emph{(2)}, and also the number of samples
on arm~$1$. This results in the sample complexity upper bound
\[
\frac{K_1 \log \left( (n-1) \delta^{-1} \log D^* (\mu_1 ,\tilde{\mu})^{-1} \right)}{D^* (\mu_1 ,\tilde{\mu})} + \sum_{i\geq 2} \frac{K_1 \log \left( \delta^{-1} \log D^* (\mu_i ,\tilde{\mu}_i )^{-1} \right) + \log \delta_i^{-1}}{D^* (\mu_i ,\tilde{\mu}_i )} \ ,
\]
on the event $\Omega'$, where $K_1$ is a universal constant.  Finally,
we define the quantities
$\delta_i = \sup \{ \epsilon >0: U_i (f_t(\epsilon )) \geq \mu_i \
\forall t\in \N \}$. Note that we have $\P (\delta_i < \gamma ) = \P (\exists t\in \N :\ U_i (f_t(\gamma )) \geq \mu_i ) \leq \gamma$ according to Theorem~\ref{thm:anytime_2} in the Supplementary Material. Substituting $\gamma =\exp (- D^* (\mu_i ,\tilde{\mu}_i ) z)$ we get
\[
\P \left( \tfrac{\log \delta_i^{-1}}{D^* (\mu_i ,\tilde{\mu}_i )} \geq z \right) \leq \exp (-D^* (\mu_i ,\tilde{\mu}_i ) z) \ .
\]
Hence $\{ \delta_i \}_{i\geq 2}$ are independent sub-exponential variables, which allows us to control their contribution to the sum above using standard techniques.
\end{proof}

%%%%%%%%%%%%%%%%%%%%%%%%%%%%%

\section{Real-World Crowdsourcing}\label{sec:experiment}

% \rob{Discuss results from New Yorker experiments here.}

We now compare the performance of {\bf lil-KLUCB} to that of other
algorithms in the literature. We do this using both synthetic data and
real data from the New Yorker Cartoon Caption contest \cite{cnet}\footnote{These data can be found at \url{https://github.com/nextml/caption-contest-data}}. To keep comparisons fair, we run the same UCB algorithm for
all the competing confidence bounds.
% \footnote{In fact, always playing
%   the arm with the highest UCB ensures that w.h.p. the best-arm will
%   eventually rise to the top - i.e. have the highest empirical
%   mean}.
We set $N = 8$ and $\delta = 0.01$ in our experiments. The confidence bounds
are {\bf [KL]}: the KL-bound
derived based on Theorem~\ref{thm:anytime_1},
%\footnote{There is no
  % uniformly best choice for the parameter $N$ - choosing larger values
  % improves the bound for large $t$ at a cost of deteriorating its
  % performance for smaller $t$. Furthermore, the choice of $\delta$
  % also influences which value of $N$ one would prefer. We set $N=8$
  % and $\delta =0.01$ in our experiments, but encourage readers to
  % experiment with other choices to find one that best suits their
  % setting/preferences.},
{\bf [SG1]}: a matching sub-Gaussian bound
derived using the proof of Theorem~\ref{thm:anytime_1}, using
sub-Gaussian tails instead of the KL rate-function (the exact
derivations are in the Supplementary Material), and
{\bf [SG2]}: the sharper sub-Gaussian bound provided by Theorem~8 of
\cite{kaufmann2016complexity}.

% Comparing \emph{(1)} and \emph{(2)} is the most relevant in
% illustrating the gains KL confidence bounds provide over sub-Gaussian
% ones, as they are the respective variants of the same bounding
% method. To the best of our knowledge \emph{(3)} is the tightest known
% anytime confidence bound relevant to this
% problem.\footnote{Calculations show that the bound \emph{(2)} with
%   $N=8$ and $\delta=0.01$ is roughly 1.025 times larger than the bound
%   \emph{(3)} of \cite{kaufmann2016complexity}.}
%Hence we include \emph{(3)} as a point of reference for the other methods.

We compare these methods by computing the empirical probability that
the best-arm is among the top 5 empirically best arms, as a function
of the total number of samples. We do so using using synthetic data in
Figure~\ref{fig:parametric} , where the Bernoulli rewards simulate
cases from Table~\ref{tab:KL_vs_SG}, and using real human response data from two
representative New Yorker caption contests in Figure~\ref{fig:NYr}.

\begin{figure}[h]\label{fig:parametric}
\begin{center}
\begin{adjustbox}{width=.85\textwidth}
\centering
\includegraphics[width = 2in]{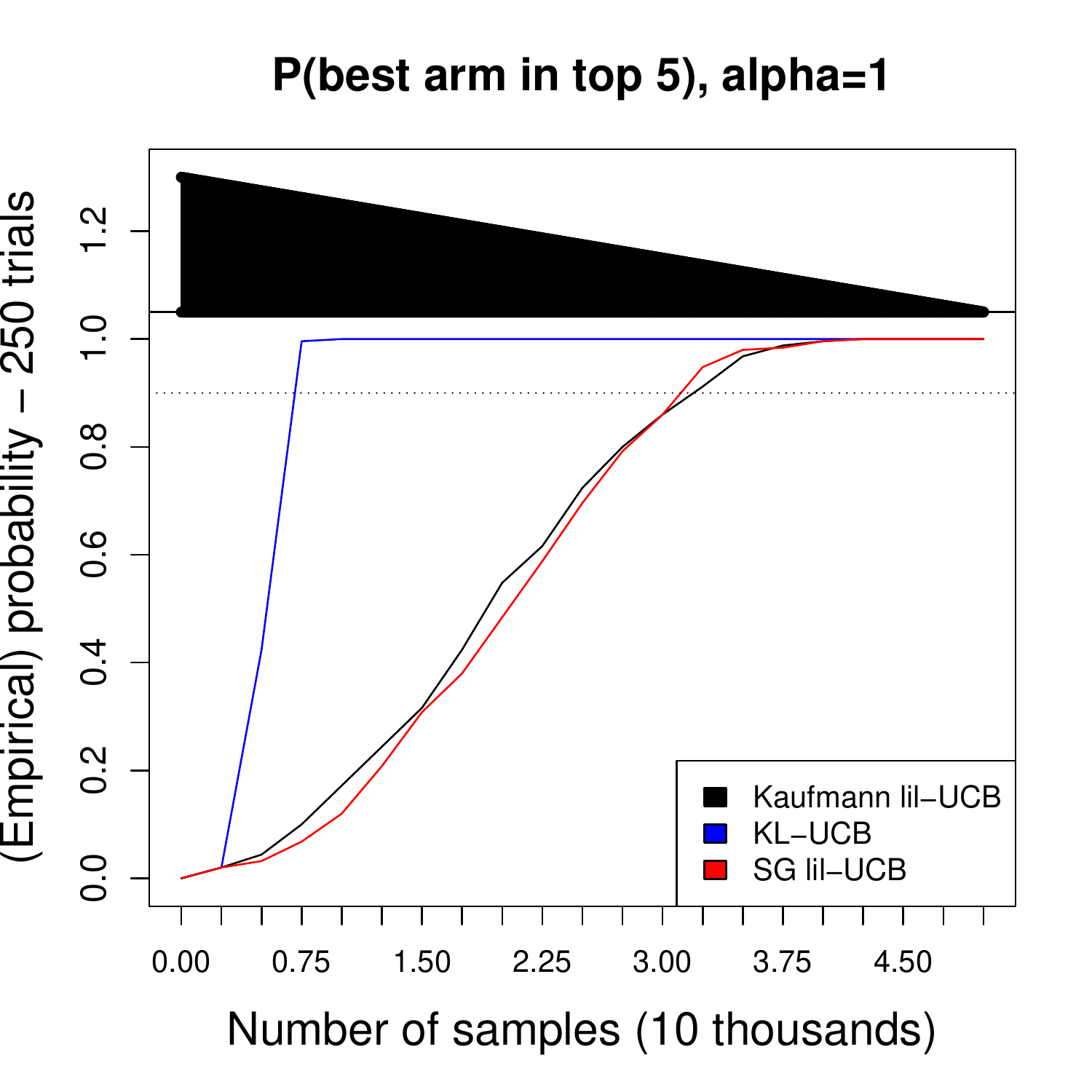} \quad
\includegraphics[width = 2in]{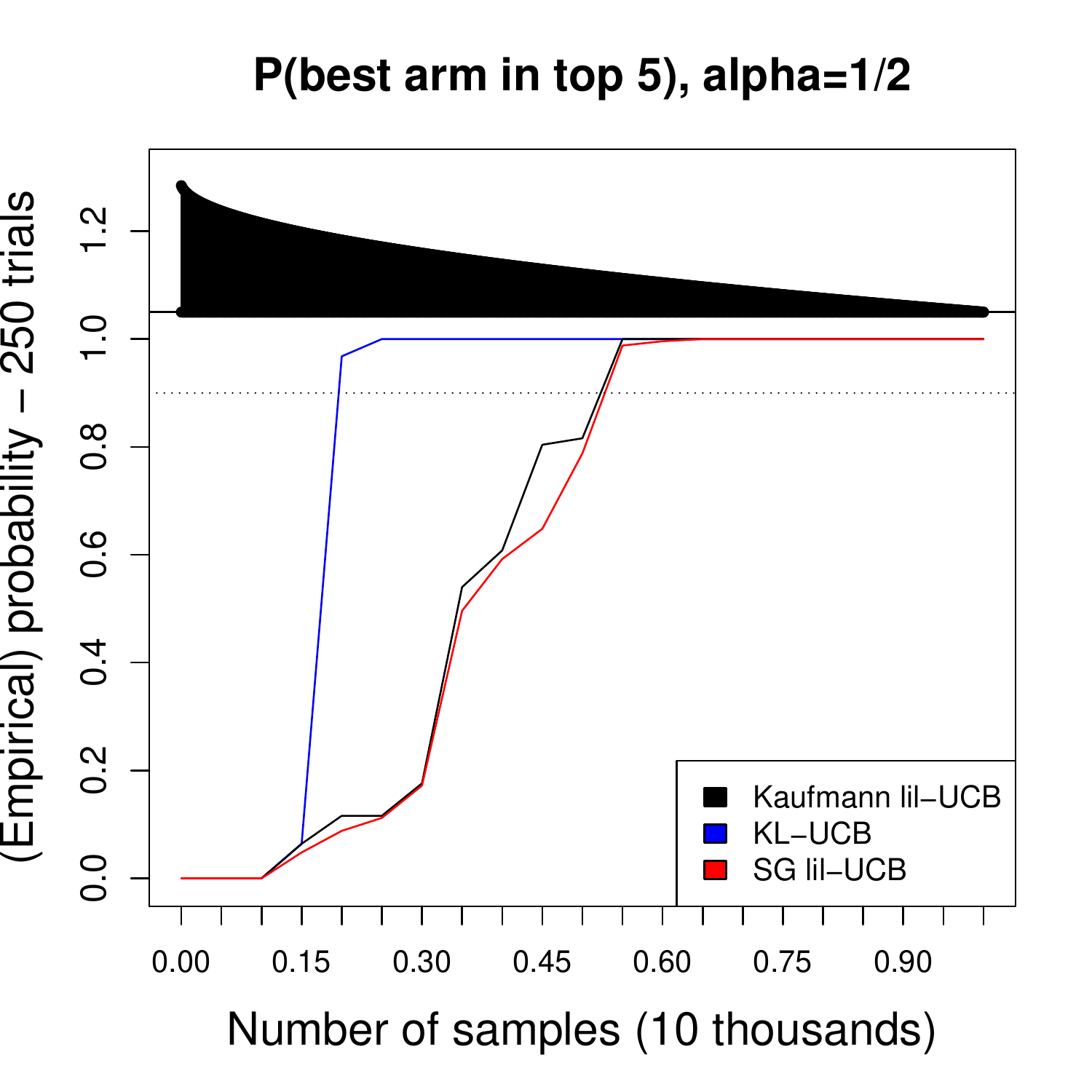}
\end{adjustbox}
\caption{{\small Probability of the best-arm in the top 5 empirically
  best arms, as a function of the number of samples, based on $250$
  repetitions. $\mu_i = 1-((i-1)/n)^\alpha$, with $\alpha=1$ in the
  left panel, and $\alpha=1/2$ in the right panel. The mean-profile is
  shown above each plot. {\bf [KL]} Blue;  {\bf [SG1]} Red;  {\bf [SG2]} Black.}}
\end{center}
\end{figure}

As seen in Table~\ref{tab:KL_vs_SG}, the KL confidence bounds have the
potential to greatly outperform the sub-Gaussian ones. To illustrate
this indeed translates into superior performance, we simulate two
cases, with means $\mu_i =1-((i-1)/n)^\alpha$, with $\alpha =1/2$ and
$\alpha=1$, and $n=1000$. As expected, the KL-based method
requires significantly fewer samples (about $20\ \%$ for
$\alpha=1$ and $30\ \%$ for $\alpha=1/2$) to find the best arm. Furthermore, the
arms with means below the median are sampled about $15$ and $25\ \%$ of the time
respectively -- key in crowdsourcing applications, since having participants answer
fewer irrelevant (and potentially annoying) questions improves both
efficiency and user experience.

\begin{figure}[h]\label{fig:NYr}
\begin{center}
\begin{adjustbox}{width=.85\textwidth}
\centering
\includegraphics[width = 2in]{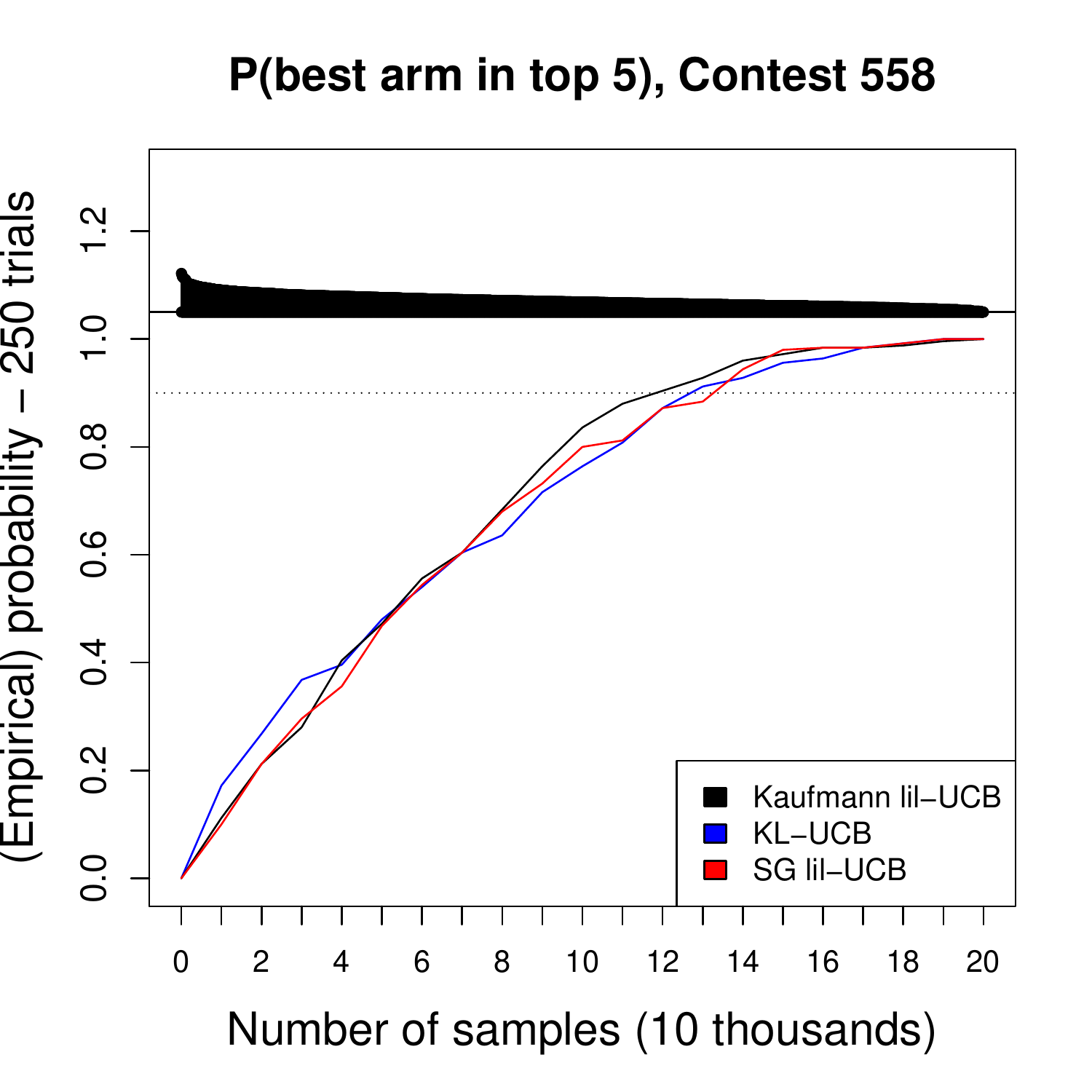} \quad
\includegraphics[width = 2in]{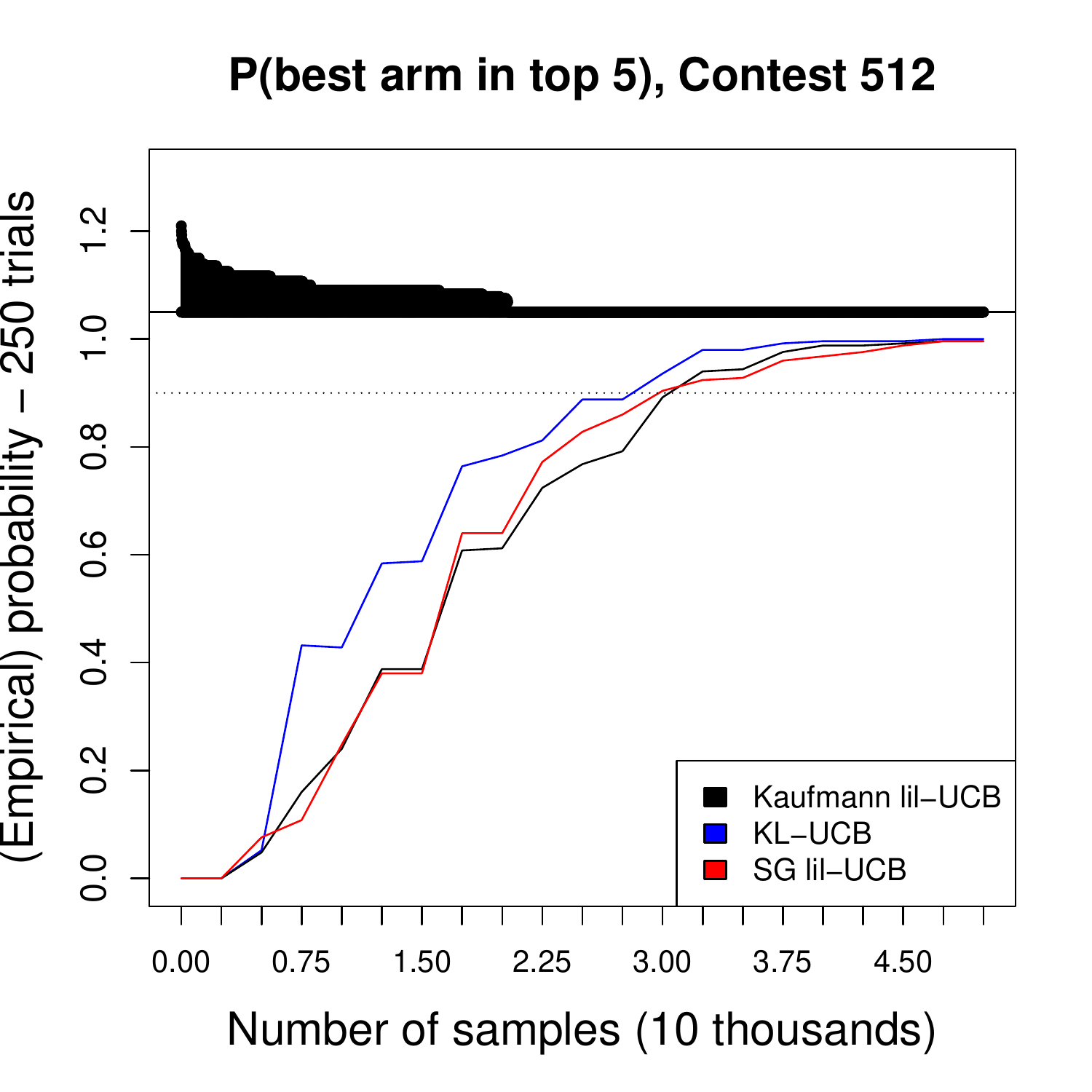}
\end{adjustbox}
\caption{{\small Probability of the best-arm in the top 5 empirically
  best arms vs.\ number of samples, based on $250$
  bootstrapped repetitions. Data from New Yorker contest 558 ($\mu_1 =
  0.536$) on left, and contest 512 ($\mu_1 = 0.8$) on
  right. Mean-profile above each plot.  {\bf [KL]} Blue;  {\bf [SG1]} Red;  {\bf [SG2]} Black. }}
\end{center}
\end{figure}

To see how these methods fair on real data, we also run these
algorithms on bootstrapped human response data from the real New
Yorker Caption Contest. The mean reward of the best arm in these
contests is usually between $0.5$ and $0.85$, hence we choose one
contest from each end of this spectrum. At the lower end of the
spectrum, the three methods fair
comparably.  This is expected because the sub-Gaussian bounds are
relatively good for means about $0.5$.  However, in cases where the
top mean is significantly larger than $0.5$ we see a marked
improvement in the KL-based algorithm.

\bibliographystyle{plain}
\bibliography{KLUCB_references}

%%%%%%%%%%%%%%%%%%%%%%%%%%%%%

\newpage
\appendix

\section{Proofs for Section~\ref{sec:anytime}}\label{app:anytime}

\begin{lemma}\label{lem:tzt_increasing}
Let $T$ be the first time index such that
(\ref{eqn:def_z_t}) has a solution. Since $z_t =1-\mu$ by definition for $t<T$, clearly $tz_t$ is increasing for $t\in [T-1]$. 

Now consider the case $t\geq T-1$. Using the convexity of $D(\mu +x,\mu )$ (in $x$) and the definition of the sequence $z_t$, we have
\begin{align*}
D \left( \mu+ \frac{t}{t+1} \frac{N}{N+1} z_t ,\mu \right) & \leq \frac{t}{t+1} D \left( \mu+ \frac{N}{N+1} z_t ,\mu \right) \\
& \leq \frac{t}{t+1} \frac{\log \left( \kappa (N) \log_2 (2t) /\delta \right)}{t} \\
& \leq \frac{\log \left( \kappa (N) \log_2 (2(t+1)) /\delta \right)}{t+1} \\
& = D \left( \mu+\frac{N}{N+1} z_{t+1},\mu \right) \ ,
\end{align*}
where the last equality holds, since $t\geq T-1$. Comparing the two ends of this chain of inequalities implies that $\frac{t}{t+1} z_t \leq z_{t+1}$ since the function $D(\mu +x,\mu )$ is increasing in $x$.
\end{lemma}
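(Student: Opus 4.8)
The plan is to establish the two-term monotonicity inequality $tz_t \le (t+1)z_{t+1}$ for every $t\in\N$, splitting the argument according to whether $t$ lies below or above the threshold index $T$ at which \eqref{eqn:def_z_t} first admits a solution. For $t\le T-2$ there is nothing to do: both $z_t$ and $z_{t+1}$ equal the capped value $1-\mu$, so $tz_t = t(1-\mu) < (t+1)(1-\mu) = (t+1)z_{t+1}$. All the work is in the transitions with $t\ge T-1$.

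For those, I would use the convexity of $x\mapsto D(\mu+x,\mu)$ on $[0,1-\mu]$ together with $D(\mu,\mu)=0$, which yields the scaling bound $D(\mu+\theta x,\mu)\le \theta\,D(\mu+x,\mu)$ for all $\theta\in[0,1]$. Applying this with $\theta = t/(t+1)$ and $x=\tfrac{N}{N+1}z_t$, then invoking the defining relation \eqref{eqn:def_z_t} and the monotonicity of $t\mapsto\log_2(2t)$, gives the chain
\begin{align*}
D\!\left(\mu+\tfrac{t}{t+1}\tfrac{N}{N+1}z_t,\mu\right)
&\le \tfrac{t}{t+1}\,D\!\left(\mu+\tfrac{N}{N+1}z_t,\mu\right)
\le \tfrac{t}{t+1}\cdot\frac{\log(\kappa(N)\log_2(2t)/\delta)}{t}\\
&= \frac{\log(\kappa(N)\log_2(2t)/\delta)}{t+1}
\le \frac{\log(\kappa(N)\log_2(2(t+1))/\delta)}{t+1}
= D\!\left(\mu+\tfrac{N}{N+1}z_{t+1},\mu\right),
\end{align*}
the last equality using \eqref{eqn:def_z_t} at index $t+1\ge T$. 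Since $D(\mu+\cdot,\mu)$ is strictly increasing on $[0,1-\mu]$, comparing the two ends of the chain gives $\tfrac{t}{t+1}z_t\le z_{t+1}$, i.e.\ $tz_t\le(t+1)z_{t+1}$.

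The point that needs care --- and the main obstacle I anticipate --- is the boundary transition $t=T-1$, where $z_{T-1}$ is the capped value $1-\mu$ rather than an actual solution of \eqref{eqn:def_z_t}, so the second inequality in the chain above is not an equality and must be justified separately. Here I would argue that, because $z\mapsto D(\mu+\tfrac{N}{N+1}z,\mu)$ is continuous and strictly increasing on $(0,1-\mu]$ with maximum value $D(\mu+\tfrac{N}{N+1}(1-\mu),\mu)$, the fact that \eqref{eqn:def_z_t} has no solution at $t=T-1$ (with a positive right-hand side) forces $D(\mu+\tfrac{N}{N+1}(1-\mu),\mu) < \tfrac{\log(\kappa(N)\log_2(2(T-1))/\delta)}{T-1}$, which is precisely the required inequality once $z_{T-1}=1-\mu$ is substituted. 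The only bookkeeping item is checking that the right-hand side of \eqref{eqn:def_z_t} is indeed positive at the small indices in question --- which holds because $\kappa(N)\ge\delta^{1/(N+1)} > \delta$ and $\log_2(2t)\ge 1$ --- so that ``no solution'' really does mean ``right-hand side exceeds the attainable maximum'' rather than ``right-hand side is nonpositive.''
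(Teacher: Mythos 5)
Your argument is correct and is essentially identical to the paper's own proof: the same split at the threshold $T$, the same convexity-based scaling $D(\mu+\theta x,\mu)\le\theta D(\mu+x,\mu)$, and the same chain of inequalities ending with monotonicity of $D(\mu+\cdot,\mu)$. Your explicit justification of the boundary case $t=T-1$ (showing that ``no solution'' forces the right-hand side to exceed the attainable maximum, and checking that it is positive) is a point the paper glosses over with ``the last equality holds, since $t\geq T-1$,'' so it is a welcome clarification rather than a deviation.
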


\begin{theorem}\label{thm:anytime_2}
Consider the setting of Theorem~\ref{thm:anytime_1} and let
\[
c(N) = \frac{N+1}{N-\log (N+1)} \ .
\]
Define $z_t$ as the solution of
\[
D(\mu + z_t,\mu ) = \frac{c(N) \log \left( \kappa (N) \log_2 (2t)/\delta \right)}{t} \ ,
\]
if a solution exists, and $z_t =1-\mu$ otherwise. Then
\begin{align*}
(i):\ & \P \left( \exists t\in \N :\ \hat{\mu}_t -\mu > z_t \right) \leq \delta \ , \\
(ii):\ & \P \left( \exists t\in \N :\ \hat{\mu}_t -\mu < -z_t \right) \leq \delta \ .
\end{align*}
\end{theorem}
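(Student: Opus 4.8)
The plan is to avoid re-running the block argument of Theorem~\ref{thm:anytime_1} and instead deduce Theorem~\ref{thm:anytime_2} from it by a pointwise comparison of deviation sequences. Write $f_t=\log(\kappa(N)\log_2(2t)/\delta)/t$, let $z_t^{(1)}$ be the sequence of Theorem~\ref{thm:anytime_1}(i) (so $D(\mu+\tfrac{N}{N+1}z_t^{(1)},\mu)=f_t$ when this is solvable in $(0,1-\mu]$ and $z_t^{(1)}=1-\mu$ otherwise), and let $z_t$ be the sequence of Theorem~\ref{thm:anytime_2}. The key claim is that $z_t\ge z_t^{(1)}$ for every $t$. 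Granting this, $\{\exists t:\hat\mu_t-\mu>z_t\}\subseteq\{\exists t:\hat\mu_t-\mu>z_t^{(1)}\}$, so part~(i) is immediate from Theorem~\ref{thm:anytime_1}(i); part~(ii) follows by applying part~(i) to $1-Y_1,1-Y_2,\dots$ (which has mean $1-\mu$) together with the flip identity $D(a,b)=D(1-a,1-b)$, which converts $D((1-\mu)+x,1-\mu)$ into $D(\mu-x,\mu)$ --- so the sequence governing part~(ii) is the solution of $D(\mu-z_t,\mu)=c(N)f_t$ (falling back to $z_t=\mu$), the obvious analogue of the stated definition.

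To prove $z_t\ge z_t^{(1)}$, the degenerate cases are immediate: if $z_t^{(1)}=1-\mu$ then, by the boundary inequality below, $c(N)f_t>\log\mu^{-1}=D(1,\mu)$ as well, so $z_t=1-\mu$; and when $z_t$ is in its fallback, $z_t=1-\mu\ge z_t^{(1)}$. In the generic case, since $D(\mu+\cdot\,,\mu)$ is strictly increasing, $z_t\ge z_t^{(1)}$ is equivalent to $c(N)f_t\ge D(\mu+z_t^{(1)},\mu)$; substituting $z=z_t^{(1)}$ this reads
\[
D\!\left(\mu+\tfrac{N}{N+1}\,z,\mu\right)\;\ge\;\tfrac{1}{c(N)}\,D(\mu+z,\mu),\qquad z\in(0,1-\mu].
\]
Since $\tfrac{1}{c(N)}=\tfrac{N}{N+1}+\tfrac{1}{N+1}\log\tfrac{1}{N+1}=g(\beta)$ for $\beta=\tfrac{N}{N+1}$ and $g(\beta):=\beta+(1-\beta)\log(1-\beta)$, this is the crux of the theorem; it is \emph{tight} as $\mu\uparrow1$ with $z=1-\mu$, which is exactly what fixes the value of $c(N)$. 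Note the naive tangent-line bound $D(\mu+\beta z,\mu)\ge\beta D(\mu+z,\mu)-(1-\beta)D(\mu,\mu+z)$ is too weak precisely near $z=1-\mu$, where $D(\mu,\mu+z)\to\infty$, so a more careful argument is needed; this is the main obstacle.

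To establish the crux, put $p=\mu+z\in(\mu,1]$, so $\mu+\beta z=\beta p+(1-\beta)\mu$, and set $\psi(p)=D(\beta p+(1-\beta)\mu,\mu)-g(\beta)D(p,\mu)$ with $\psi(\mu)=0$. One checks $\psi'(\mu)=0$ and $\psi''(\mu)=\bigl(\beta^2-g(\beta)\bigr)/(\mu(1-\mu))>0$, using the one-variable fact $g(\beta)<\beta^2$ on $(0,1)$, so $\psi\ge0$ near $\mu$. I would then show $\psi$ is unimodal on $[\mu,1]$: with $q_1=\beta p+(1-\beta)\mu$ we have $\psi''(p)=\tfrac{\beta^2}{q_1(1-q_1)}-\tfrac{g(\beta)}{p(1-p)}$, which has the sign of $R(p)-g(\beta)/\beta^2$ where $R(p)=\tfrac{p(1-p)}{q_1(1-q_1)}$; since $\tfrac{d}{dp}\log R(p)=(1-\beta)\bigl(\tfrac{\mu}{p\,q_1}-\tfrac{1-\mu}{(1-p)(1-q_1)}\bigr)$ is decreasing in $p$, $\log R$ is concave, $R$ is unimodal with $R(\mu)=1$, $R(1)=0$, so $\{\psi''\ge0\}$ is an initial subinterval of $[\mu,1]$; hence $\psi'$ rises then falls, and $\psi'(\mu)=0$ makes $\psi$ unimodal. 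Therefore $\min_{[\mu,1]}\psi=\min\{\psi(\mu),\psi(1)\}=\min\{0,\psi(1)\}$, and it suffices to verify the single boundary inequality
\[
D\!\left(\tfrac{N}{N+1}+\tfrac{1}{N+1}\mu,\ \mu\right)\;\ge\;g(\beta)\,\log\tfrac{1}{\mu}.
\]
For this last step, substitute $t=1-\mu$ and then $u=1/(1-t)\ge1$; after clearing logarithms the inequality becomes $H(t)\ge0$ for an explicit $H$ with $H(0)=H'(0)=0$, and one finds that $H'(t)$ carries a factor of the form $(1-u)\log(1-\beta)-\log\bigl(u\beta+(1-\beta)\bigr)$ whose $u$-derivative is $-\log(1-\beta)-\tfrac{\beta}{u\beta+(1-\beta)}$, which is $\ge-\log(1-\beta)-\beta\ge0$ by $u\ge1$ and $\log x\le x-1$; hence $H'\ge0$ and $H\ge0$, which closes the argument.
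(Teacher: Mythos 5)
Your proposal is correct and follows essentially the same route as the paper: both reduce the theorem to the pointwise inequality $D(\mu+x,\mu)\le c(N)\,D\bigl(\mu+\tfrac{N}{N+1}x,\mu\bigr)$ for $x\in[0,1-\mu]$ (your $\psi\ge 0$ is the paper's $g_\mu\le 0$ up to a negative constant factor), prove it by showing the difference has a single inflection point so that its sign is controlled by the endpoints $x=0$ and $x=1-\mu$, and then verify the endpoint case by an elementary monotonicity computation whose crux is $-\log(1-\beta)\ge\beta$. The only differences are organizational: you feed the inequality in by comparing the two deviation sequences ($z_t\ge z_t^{(1)}$) rather than by substituting it into step \eqref{eqn:key_to_thm2} of the proof of Theorem~\ref{thm:anytime_1}, and you obtain the sign pattern of the second derivative via log-concavity of $R(p)$ where the paper analyzes the roots of an explicit quadratic --- both harmless repackagings.
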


The correctness of the confidence intervals $L'(t,\delta )$ and $U'(t,\delta )$ follow from Theorem~\ref{thm:anytime_2} in the same way as that of $L(t,\delta )$ and $U(t,\delta )$ follow from Theorem~\ref{thm:anytime_1} shown in Section~\ref{sec:anytime}.

\begin{proof}[Proof of Theorem~\ref{thm:anytime_2}]
It is clear by consulting the proof of Theorem~\ref{thm:anytime_1} that if we had
\[
D(\mu +x,\mu ) \leq c(N) D \left( \mu + \frac{N}{N+1} x,\mu \right) \quad \forall x\in [0,1-\mu ],\ \forall \mu \in (0,1) \ ,
\]
then using it at step \eqref{eqn:key_to_thm2} would yield the desired result.

Let $\alpha \in (0,1)$ and use the notation $D(\mu +x,\mu ) = f_\mu (x)$. We wish to show that
\[
g_\mu (x) := f_\mu (x) -c f_\mu (\alpha x) \leq 0 \quad \forall x\in [0,1-\mu ],\ \forall \mu \in (0,1) \ .
\]
with $c=\frac{1}{\alpha + (1-\alpha ) \log (1-\alpha)}$.

We first examine $g_\mu (x)$ as a function of $x$. Recall that the first and second derivatives of $f_\mu (x)$ (in $x$) are
\[
f'_\mu (x) = \log \frac{\mu +x}{\mu} - \log \frac{1-\mu -x}{1-\mu} \ ,\ \textrm{and}\ f''_\mu (x) = \frac{1}{(\mu +x)(1-\mu -x)} \ .
\]

Hence
\[
g''_\mu (x) = f''_\mu (x) - c\alpha^2 f''_\mu (\alpha x) \ .
\]
As for the sign of the second derivative, we have
\begin{align*}
f''_\mu (x) - c\alpha^2 f''_\mu (\alpha x) & \lessgtr 0 \\
& \Updownarrow \\
f''_\mu (x) & \lessgtr c \alpha^2 f''_\mu (\alpha x) \\
& \Updownarrow \\
(\mu +\alpha x)(1-\mu -\alpha x) & \lessgtr c\alpha^2 (\mu +x)(1-\mu -x) \\
& \Updownarrow \\
(c-1)\alpha^2 x^2 + \alpha (1-2\mu )(1-c\alpha )x + \mu (1-\mu )(1-c\alpha^2 ) & \lessgtr 0 \ .
\end{align*}

Denote the left side by $h(x)$. The roots of $h(x)$ are
\begin{align*}
x_{1,2} & = \frac{-\alpha (1-2\mu )(1-c\alpha ) \pm \sqrt{\alpha^2 (1-2\mu )^2(1-c\alpha )^2 - 4 \alpha (1-2\mu )(1-c\alpha )\mu (1-\mu )(1-c\alpha^2 )}}{2 \alpha (1-2\mu )(1-c\alpha )} \\
& = \frac{-(1-2\mu )(1-c\alpha ) \pm \sqrt{(1-2\mu )^2(1-c\alpha )^2 - 4 (c-1)\mu (1-\mu )(1-c\alpha^2 )}}{2 (c-1)\alpha} \ .
\end{align*}
Note that
\[
c = \frac{1}{\alpha + (1-\alpha ) \log (1-\alpha)} \geq \frac{1}{\alpha - (1-\alpha )\alpha} = \frac{1}{\alpha^2} > \frac{1}{\alpha} >1 \ ,
\]
since $\log (1+x)\leq x$. This implies that the expression under the root is positive, and that
\[
\sqrt{(1-2\mu )^2(1-c\alpha )^2 - 4 (c-1)\mu (1-\mu )(1-c\alpha^2 )} \geq |(1-2\mu )(1-c\alpha )| \ ,
\]
which in turn implies that at least one of the roots of $h(x)$ is negative. Let $y=\max \{ x_1,x_2 \}$.

By the previous observation, the function $g_\mu (x)$ is concave on the interval $[0,y]$ and convex on the interval $[y,1-\mu ]$ (with the convention that $[a,b]=\emptyset$ if $a>b$). Noting that $g_\mu (0)=0$ and $g'_\mu (0)=0$ we have $g_\mu (x)\leq 0$ on $[0,y]$. On the other hand, $g_\mu (1-\mu )\leq 0 \Rightarrow g_\mu (x) \leq 0$ for $x\in [y,1-\mu ]$, by the convexity of $g_\mu (x)$ on this interval and that $g_\mu (y)\leq 0$.

Hence, all that remains to show is $g_\mu (1-\mu )\leq 0$ for all $\mu \in (0,1)$. This yields the inequality
\begin{align*}
0 & \geq \log \frac{1}{\mu} - c \left( \left( \mu + \alpha (1-\mu ) \right) \log \frac{\mu + \alpha (1-\mu )}{\mu } + \left( 1-\mu -\alpha (1-\mu ) \right) \log \frac{1-\mu -\alpha (1-\mu )}{1-\alpha} \right) \\
& = \log \frac{1}{\mu} - c \left( \left( (1-\alpha ) \mu + \alpha \right) \log \left( 1+ \frac{\alpha}{(1-\alpha )\mu} \right) + \log (1-\alpha ) \right):= l(\mu )\ .
\end{align*}

Note that the right side is equal to zero at $\mu =1$. To conclude the inequality above, we show that the right side is increasing in $\mu$. We have
\begin{align*}
\frac{\partial}{\partial \mu} l(\mu ) & = -\frac{1}{\mu} -c \left( (1-\alpha ) \log \left( 1+\frac{\alpha}{(1-\alpha )\mu} \right) -\frac{\alpha}{\mu} \right) \\
& = \frac{1}{\mu} \left( c\alpha -1 \right) -c(1-\alpha ) \log \left( 1+\frac{\alpha}{(1-\alpha )\mu} \right) \ .
\end{align*}

Using the inequality $\log (1+x) \leq \frac{x-a}{a+1} + \log (1+a)$ (that is the line tangential to $\log (1+x)$ at any point $a>-1$ upper bounds $\log (1+x)$) with $a=\frac{\alpha}{1-\alpha}$, we can continue as
\begin{align*}
\frac{\partial}{\partial \mu} l(\mu ) & \geq \frac{1}{\mu} \left( c\alpha -1 \right) -c(1-\alpha ) \log \left( \alpha \left( \frac{1}{\mu} -1 \right) - \log (1-\alpha ) \right) \\
& = \frac{1}{\mu} \left( c\alpha -1-c\alpha (1-\alpha ) \right) - c(1-\alpha ) \left( \alpha - \log (1-\alpha ) \right) \ .
\end{align*}

Finally, noting that $c\alpha -1-c\alpha (1-\alpha )$ is positive (since $c\geq 1/\alpha^2$) we can further decrease the right hand side by using $1/\mu \geq 1$, which yields
\[
\frac{\partial}{\partial \mu} l(\mu ) \geq c \left( \alpha -(1-\alpha )\log (1-\alpha ) \right) -1 \ .
\]
The right side is non-negative, whenever $c\geq 1/(\alpha - (1-\alpha )\log (1-\alpha ))$, concluding the proof.
\end{proof}

%%%%%%%%%%%%%%%%%%%%%%%%%

\section{Proofs for Section~\ref{sec:lil-KLUCB}}\label{app:lil-KLUCB}

\begin{proposition}\label{prop:correctness}
The lil-KLUCB algorithm is $2 \delta$-PAC. 
\end{proposition}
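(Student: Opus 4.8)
The plan is to carry out the standard LUCB/LUCB++ correctness argument, paying attention to the \emph{asymmetric} confidence budget built into lil-KLUCB — level $\delta$ for the upper bound of arm~$1$ and level $\delta/(n-1)$ for the lower bound of each of the $n-1$ suboptimal arms — which is exactly what keeps the total error probability at $2\delta$ rather than $O(n\delta)$. The first step is to record the one-sided anytime guarantees for the bounds the algorithm actually uses. Writing $f_m(\epsilon) = c(N)\log(\kappa(N)\log_2(2m)/\epsilon)/m$, the confidence bounds in the algorithm are $U_i(f_m(\epsilon))=U'(m,\epsilon)$ and $L_i(f_m(\epsilon))=L'(m,\epsilon)$ evaluated on the $m$-sample empirical mean of arm~$i$. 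Since $\{X_{i,j}\}_j$ is i.i.d., the arm streams are independent, and the algorithm's choices up to time $t$ depend only on rewards already revealed, $T_i(t)$ is just a data-dependent index into arm~$i$'s i.i.d.\ stream; hence Theorem~\ref{thm:anytime_2}, applied to that stream, gives for every arm $i$ and every $\epsilon\in(0,1)$
\[
\P\big(\exists m\in\N:\ U'(m,\epsilon) < \mu_i\big) \le \epsilon
\quad\text{and}\quad
\P\big(\exists m\in\N:\ L'(m,\epsilon) > \mu_i\big) \le \epsilon ,
\]
the first following from part~(ii) and the second from part~(i) of Theorem~\ref{thm:anytime_2}, after the elementary translation between the ``$z_t$-deviation'' statement and the confidence-interval statement described just before that theorem.

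Next I would introduce the good event
\[
\Omega^\star = \big\{\, U_1(f_m(\delta)) \ge \mu_1 \ \ \forall m\in\N \,\big\}
\;\cap\; \bigcap_{i=2}^{n} \big\{\, L_i(f_m(\delta/(n-1))) \le \mu_i \ \ \forall m\in\N \,\big\} ,
\]
and bound its complement by a union bound: $\P\big((\Omega^\star)^c\big) \le \delta + (n-1)\cdot\tfrac{\delta}{n-1} = 2\delta$. The key observation is that \emph{only} these $n$ one-sided events are needed for correctness; in particular the lower bound of arm~$1$ and the upper bounds of the suboptimal arms need not be accurate (they enter only in the sample-complexity analysis of Theorem~\ref{thm:samplecomp}), which is why the budget closes at $2\delta$ rather than $4\delta$.

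Then I would show that on $\Omega^\star$ the algorithm can only halt by returning arm~$1$. Suppose it halts at time $t$ with $\TOP(t)=a$; I claim $a=1$. If $a\neq 1$, then arm~$1$ is among the arms other than $a$, so the stopping rule gives $L_a(T_a(t),\delta/(n-1)) > \max_{i\neq a} U_i(T_i(t),\delta) \ge U_1(T_1(t),\delta)$. Evaluating the events defining $\Omega^\star$ at the random indices $m=T_a(t)$ and $m=T_1(t)$ yields $L_a(T_a(t),\delta/(n-1)) \le \mu_a$ (a legitimate lower-bound event since $a\ge 2$) and $U_1(T_1(t),\delta)\ge \mu_1$, whence $\mu_a \ge L_a(T_a(t),\delta/(n-1)) > U_1(T_1(t),\delta) \ge \mu_1$, contradicting $\mu_1>\mu_a$. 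Hence $a=1$. Consequently the event ``lil-KLUCB halts and outputs an arm $\neq 1$'' is contained in $(\Omega^\star)^c$ and so has probability at most $2\delta$; combined with the sample-complexity bound of Theorem~\ref{thm:samplecomp}, which in particular guarantees termination with probability at least $1-2\delta$, this is precisely the claimed $2\delta$-PAC property.

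I do not expect a genuine obstacle here — this is the textbook LUCB correctness argument. The three points that require care are: (a) allocating the confidence budget asymmetrically ($\delta$ to arm~$1$'s UCB, $\delta/(n-1)$ to each suboptimal arm's LCB) so that the union bound closes at exactly $2\delta$; (b) justifying the use of the fixed-schedule anytime bounds of Theorem~\ref{thm:anytime_2} at the data-dependent pull counts $T_i(t)$, which is immediate from independence of the arm streams together with the uniform-in-$m$ nature of those bounds; and (c) the routine translation between the two equivalent presentations ($L,U$ versus $L',U'$) of the confidence bounds.
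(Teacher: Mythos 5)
Your proof is correct and follows essentially the same route as the paper's: define the good event consisting of arm~$1$'s upper bound and the suboptimal arms' lower bounds being valid, union-bound its complement by $\delta + (n-1)\cdot\delta/(n-1) = 2\delta$, and derive the contradiction $\mu_a > \mu_1$ from the stopping condition if the output were some $a\neq 1$. If anything, your version is slightly more careful than the paper's, which states the good event in terms of the \emph{two-sided} events $\Omega_i(\epsilon)$ (whose complements each cost $2\epsilon$, naively giving $4\delta$), whereas you correctly observe that only the $n$ one-sided events are needed, closing the budget at exactly $2\delta$.
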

\begin{proof}
Suppose that when the algorithm stops, $\TOP (t) \neq 1$. This implies that there exists $t\in \N$ and $i\geq 2$ for which
\[
L_i (f_{T_i(t)}(\delta /(n-1))) > U_1 (f_{T_1(t)}(\delta )) \ .
\]
Consider the events $\Omega_1 (\delta)$ and $\Omega_i (\delta /(n-1))$ for $i \geq 2$, and let their intersection be
\[
\Omega = \Omega_1 (\delta ) \cap (\cap_{i\geq 2} \Omega_i (\delta /(n-1))) \ .
\]
Note that
\[
\P (\Omega ) = 1- \P (\overline{\Omega}) \geq 1-\P \big( \overline{\Omega}_1 (\delta ) \cup ( \cup_{i\geq 2} \overline{\Omega}_i (\delta /(n-1)) \big) \geq 1-2\delta
\]
by Theorem~\ref{thm:anytime_2} (where $\overline{\Omega}$ is the complementary event of $\Omega$). However, on the event $\Omega$ the algorithm cannot fail, as on this event $L_i (f_{T_i(t)}(\delta /(n-1))) \leq \mu_i$ and $U_1 (f_{T_1(t)}(\delta )) \geq \mu_1$ which (together with the first display) would imply $\mu_1 < \mu_i$, a contradiction.
\end{proof}

The backbone to proving Theorem~\ref{thm:samplecomp} is the following lemma. Recall that for $\mu ,\tilde{\mu}\in [0,1]$, the Chernoff information $D^* (\mu ,\tilde{\mu})$ between two Bernoulli random variables with parameters $\mu$ and $\tilde{\mu}$ can be written as
\[
D^* (\mu ,\tilde{\mu}) = \inf_{x\in (0,1)} \max \left\{ D(x,\mu ),D(x,\tilde{\mu}) \right\} \ .
\]

\begin{lemma}\label{lem:stop}
Let $Y_1,Y_2,\dots$ be independent samples from a distribution $\P$, and consider a sequence of confidence bounds for the mean $\mu$ of the form
\[
U (f_t(\delta )) = \sup \left\{ m>\hat{\mu}_t :\ D(\hat{\mu}_t ,m) \leq f_t(\delta ) \right\} \ ,
\]
where $\hat{\mu}_t$ is the empirical mean based on $\{ Y_j \}_{j\in [t]}$, $\delta \in (0,1)$ and $f_t (x)$ is decreasing in $x$. Consider a realization of the sequence $\{ \hat{\mu}_t \}_{t\in \N}$, and suppose that $\epsilon \in (0,1)$ is such that
\[
D (\hat{\mu}_t,\mu ) \leq f_t (\epsilon ) \ \forall t\in \N \ .
\]
Then for any fixed $\tilde{\mu} \in (\mu ,1)$ we have
\[
f_t (\delta \cdot \epsilon ) < D^* (\tilde{\mu},\mu )\ \Rightarrow \ U (f_t(\delta )) < \tilde{\mu} \ .
\]
\end{lemma}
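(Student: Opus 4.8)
The plan is to argue by contradiction. Fix a time $t$ with $f_t(\delta\cdot\epsilon)<D^*(\tilde{\mu},\mu)$, suppose toward a contradiction that nonetheless $U(f_t(\delta))\geq\tilde{\mu}$, and derive from this the two incompatible bounds $D(\hat{\mu}_t,\mu)<D^*(\tilde{\mu},\mu)$ and $D(\hat{\mu}_t,\tilde{\mu})<D^*(\tilde{\mu},\mu)$; since $D^*(\tilde{\mu},\mu)=\inf_{x}\max\{D(x,\mu),D(x,\tilde{\mu})\}$ by the variational identity recalled just before the lemma, these two bounds together give $D^*(\tilde{\mu},\mu)\leq\max\{D(\hat{\mu}_t,\mu),D(\hat{\mu}_t,\tilde{\mu})\}<D^*(\tilde{\mu},\mu)$, the desired contradiction.

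The facts I would use about the binary KL divergence are elementary: for fixed $z$, the map $m\mapsto D(z,m)$ is continuous, vanishes at $m=z$, and is strictly increasing on $[z,1)$; likewise $x\mapsto D(x,\mu)$ is strictly increasing on $[\mu,1)$; and the variational identity above is symmetric in its two arguments. Now I would chain inequalities. Since $f_t(\cdot)$ is decreasing and $\delta,\epsilon\in(0,1)$ force $\delta\cdot\epsilon<\min\{\delta,\epsilon\}$, the hypothesis yields $f_t(\delta)\leq f_t(\delta\cdot\epsilon)<D^*(\tilde{\mu},\mu)$ and $f_t(\epsilon)\leq f_t(\delta\cdot\epsilon)<D^*(\tilde{\mu},\mu)$. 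Combining the second of these with the standing assumption $D(\hat{\mu}_t,\mu)\leq f_t(\epsilon)$ gives $D(\hat{\mu}_t,\mu)<D^*(\tilde{\mu},\mu)$, the first of the two bounds sought. Moreover, evaluating the variational identity at $x=\tilde{\mu}$ shows $D^*(\tilde{\mu},\mu)\leq D(\tilde{\mu},\mu)$, so $D(\hat{\mu}_t,\mu)<D(\tilde{\mu},\mu)$, and the strict monotonicity of $D(\cdot,\mu)$ forces $\hat{\mu}_t<\tilde{\mu}$. Finally, invoking the contradiction hypothesis $U(f_t(\delta))\geq\tilde{\mu}$: because $U(f_t(\delta))=\sup\{m>\hat{\mu}_t:\ D(\hat{\mu}_t,m)\leq f_t(\delta)\}$ and $m\mapsto D(\hat{\mu}_t,m)$ is increasing, one has $D(\hat{\mu}_t,m)\leq f_t(\delta)$ for every $m\in(\hat{\mu}_t,U(f_t(\delta)))$; since $\hat{\mu}_t<\tilde{\mu}\leq U(f_t(\delta))$, letting $m\uparrow\tilde{\mu}$ and using continuity gives $D(\hat{\mu}_t,\tilde{\mu})\leq f_t(\delta)<D^*(\tilde{\mu},\mu)$, which is the second bound. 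This closes the argument, so $U(f_t(\delta))<\tilde{\mu}$.

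The proof is short once the pieces are assembled; the one step needing a little care is extracting the pointwise estimate $D(\hat{\mu}_t,\tilde{\mu})\leq f_t(\delta)$ from the supremum defining $U(f_t(\delta))$, which is where the convexity and monotonicity of $D(\hat{\mu}_t,\cdot)$ must be used rather than just the formal definition of $U$. A secondary technicality is checking that the variational formula for $D^*$ behaves correctly if $\hat{\mu}_t$ equals $0$ (handled by continuity of $x\mapsto\max\{D(x,\mu),D(x,\tilde{\mu})\}$ on $[0,1]$ when $\mu,\tilde{\mu}\in(0,1)$; note $\tilde{\mu}<1$ already forces $\hat{\mu}_t<1$). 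Everything else is routine bookkeeping with monotone functions.
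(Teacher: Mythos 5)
Your proof is correct and follows essentially the same route as the paper's: both arguments derive the two bounds $D(\hat{\mu}_t,\mu)\leq f_t(\delta\cdot\epsilon)$ and $D(\hat{\mu}_t,\tilde{\mu})\leq f_t(\delta\cdot\epsilon)$ and then contradict the variational characterization $D^*(\tilde{\mu},\mu)=\inf_x\max\{D(x,\mu),D(x,\tilde{\mu})\}$. The only difference is that you spell out carefully (via monotonicity and continuity of $D(\hat{\mu}_t,\cdot)$ and the preliminary observation $\hat{\mu}_t<\tilde{\mu}$) the step that the paper compresses into the equivalence $\tilde{\mu}\leq U(f_t(\delta))\iff D(\hat{\mu}_t,\tilde{\mu})\leq f_t(\delta)$.
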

\begin{proof}
We first note that $f_t (\delta \cdot \epsilon ) \geq \min \{ f_t (\delta ),f_t (\epsilon ) \}$ since $\delta ,\epsilon \leq 1$ and $f_t (\cdot )$ is decreasing.

The claim then follows by the definitions of $D^* (\mu ,\tilde{\mu} ), U_t (\delta )$ and $\epsilon$. In particular, on one hand $D (\hat{\mu}_t,\mu ) \leq f_t (\delta \cdot \epsilon )$ for every $t\in \N$. On the other hand, 
\[
\tilde{\mu} \leq U_t (\delta )\ \iff \  D(\hat{\mu}_t ,\tilde{\mu}) \leq f_t(\delta )\ \Rightarrow \ D(\hat{\mu}_t ,\tilde{\mu}) \leq f_t(\delta \cdot \epsilon ) \ .
\]
This would imply that for $\hat{\mu}_t$ we both have both $D (\hat{\mu}_t,\mu ) \leq f_t (\delta \cdot \epsilon )$ and $D(\hat{\mu}_t ,\tilde{\mu}) \leq f_t(\delta \cdot \epsilon )$. However, this is impossible, by the definition of $D^* (\tilde{\mu},\mu )$.
\end{proof}

With this lemma, we are ready to prove Theorem~\ref{thm:samplecomp}.

\begin{proof}[Proof of Theorem~\ref{thm:samplecomp}]
Observe that at each time step two things can happen in the algorithm (apart from stopping): \emph{(1)} Arm~1 is not pulled (two sub-optimal arms are pulled); \emph{(2)} Arm~1 is pulled together with some other (suboptimal) arm.

Our aim is to upper bound the number of times any given arm is be played for either of the reasons above. We do so on an event of the form
\[
\Omega' = \bigcap_{i\in [n]} \Omega_i (\delta_i ) \ ,
\]
as a function of the quantities $\{ \delta_i \}_{i\in [n]}$, invoking Lemma~\ref{lem:stop}. We set $\delta_1 =\delta$ and choose $\{ \delta_i \}_{i\geq 2}$ such that they take the largest possible values, i.e. $\delta_i = \sup \{ \epsilon \in (0,1):\  \Omega_i (\epsilon )\ \textrm{holds}\}$. Finally, we control the contribution of these random $\delta_i$ to the sample complexity bound obtained in the previous step.

Note that we know from Theorem~\ref{thm:anytime_2} that $\P ( \overline{\Omega}_1 (\delta ))\leq \delta$.

\vspace{0.2cm}
\textbf{A sample complexity bound under $\Omega'$:} If Arm~1 is not pulled at time $t$, there has to exist another Arm~$i$ such that $\hat{\mu}_{i,t} \geq \hat{\mu}_{1,t}$. Under the event $\Omega_1 (\delta )$ this can no longer happen once $U_i (f_{T_i(t)}(\delta )) < \mu_1$. By Lemma~\ref{lem:stop} the latter is guaranteed when
\[
f_{T_i(t)} (\delta \cdot \delta_i ) < D^* (\mu_i ,\mu_1 ) \ .
\]
Using the notation
\[
\tau_i (\delta \cdot \delta_i ) = \min \left\{ t\in \N :\ f_t (\delta \cdot \delta_i ) < D^* (\mu_i ,\mu_1 ) \right\} \ ,
\]
we know that any suboptimal Arm~$i$ can only be pulled at most $\tau_i (\delta \cdot \delta_i )$ times in a way that it is not pulled together with Arm~1. Hence, Arm~1 will be played eventually.

Suppose that at time $t$ a suboptimal Arm~$i$ ($i\geq 2$) is pulled together with Arm~1. This can only happen if the confidence regions of the means of the two arms overlap at time $t$, i.e. $L_1 (f_{T_1(t)}(\delta )) \leq U_i (f_{T_i(t)}(\delta ))$. However, this is impossible once there exists a value $\tilde{\mu}_i \in (\mu_i ,\mu_1 )$ that separates the two confidence bounds, i.e $U_i (f_{T_i(t)}(\delta )) < \tilde{\mu}_i < L_1 (f_{T_1(t)} (\delta ))$.

According to Lemma~\ref{lem:stop}, this happens once $T_i (t)$ is such that
\[
f_{T_i(t)} (\delta \cdot \delta_i ) \leq D^* (\mu_i ,\tilde{\mu}_i) \ ,
\]
and $T_1(t)$ is such that
\[
f_{T_1(t)} (\delta ) \leq D^* (\mu_1 ,\tilde{\mu}_i ) \ .
\]
Note that in the second inequality, the quantity on the left hand side can indeed be chosen as $f_{T_1(t)} (\delta )$ instead of $f_{T_1(t)} (\delta^2 )$), which can be easily seen by consulting the proof of Lemma~\ref{lem:stop}.

For $i\geq 2$ let
\[
\xi_i (\delta \cdot \delta_i ) = \min \left\{ t\in \N :\ f_t (\delta \cdot \delta_i ) < D^* (\mu_i ,\tilde{\mu}_i ) \right\} \ ,
\]
and
\[
\xi_1 (\delta ) = \min \left\{ t\in \N :\ f_t (\delta /(n-1)) < \min_{i\geq 2} D^* (\mu_1 ,\tilde{\mu}_i ) \right\} \ .
\]
By monotonicity of the Chernoff-information $\xi_i (\delta \cdot \delta_i )\geq \tau_i (\delta \cdot \delta_i )$ for every $i\geq 2$. Thus, Arm~$i$ can not be pulled more than $\xi_i (\delta \cdot \delta_i )$ times.

Hence the sample complexity on the event $\Omega'$ is upper bounded by
\[
\xi_1 (\delta ) + \sum_{i\geq 2} \xi_i (\delta \cdot \delta_i ) \ .
\]

\vspace{0.2cm}
\textbf{Controlling the contribution of the $\delta_i$:} It is easy to check that there exists a universal constant $K_1$ such that
\[
\xi_i (\delta \cdot \delta_i ) \leq \frac{K_1 \log \left( (\delta \cdot \delta_i )^{-1} \log D^* (\mu_i ,\tilde{\mu}_i )^{-1} \right)}{D^* (\mu_i ,\tilde{\mu}_i )} \ .
\]
and
\[
\xi_1 (\delta ) \leq \frac{K_1 \log \left( (n-1) \delta^{-1} \log D^* (\mu_1 ,\tilde{\mu})^{-1} \right)}{D^* (\mu_1 ,\tilde{\mu})} \ .
\]

Now let $\delta_i = \sup \{ \epsilon >0: U_i (f_t(\epsilon )) \geq \mu_i \ \forall t\in \N \}$. We have
\[
\P (\delta_i < \gamma ) = \P (\exists t\in \N :\ U_i (f_t(\epsilon )) \geq \mu_i ) \leq \gamma
\]
according to Theorem~\ref{thm:anytime_2}. Hence, substituting $\gamma =\exp (- D^* (\mu_i ,\tilde{\mu}_i ) z)$ we get
\[
\P \left( \frac{\log \delta_i^{-1}}{D^* (\mu_i ,\tilde{\mu}_i )} \geq z \right) \leq \exp (-D^* (\mu_i ,\tilde{\mu}_i ) z) \ .
\]
Hence $D^* (\mu_i ,\tilde{\mu}_i )^{-1} \log \delta_i^{-1}$ are independent sub-exponential random variables. Using standard techniques for bounding sums of sub-exponential random variables, we have
\[
\P \left( \sum_{i\geq 2} \frac{\log \delta_i^{-1}}{D^* (\mu_i ,\tilde{\mu}_i )} \geq K_2 \sum_{i\geq 2} \frac{\log \delta^{-1}}{D^* (\mu_i ,\tilde{\mu}_i )} \right) \leq \delta \ ,
\]
with some constant $K_2$.

Combining this inequality with those for $\xi_i (\cdot )$ concludes the proof.
\end{proof}

%%%%%%%%%%%%%%%%%%%%%%%%%

\section{The sub-Gaussian tail-bounds for the numerical comparisons of Section~\ref{sec:experiment}}\label{app:experiment}

We can get a sub-Gaussian tail bound as well with the method of Theorem~\ref{thm:anytime_1} as follows. We start by the same union-bound \ref{eqn:union_bound}.

Upper bounding the terms in the second sum go analogously up to the display \ref{eqn:kl_to_subgauss}. At that point, we can use Pinsker's inequality stating that $2(x-y)^2 \leq D(x,y)$ (see \cite{Tsybakov_2009})\footnote{Note that another approach would be to use Hoeffding's bound for the moment generating function $E(e^{\lambda (Y_1 -\mu )})$ at \ref{eqn:kl_to_subgauss_2}. At the end, this would result in the same result as using Pinsker's inequality.}. This yields
\[
\P \left( \exists t\in [2^k ,2^{k+1}]:\ \hat{\mu}_t -\mu > z_t \right) \leq \exp \left( -2t_j \left( \frac{N+j-1}{N+j} \right)^2 z_{t_{j-1}}^2 \right) \ .
\]
Recall that $t_j = (1+\frac{j}{N})2^k$ and define
\[
z_t = \sqrt{\frac{1}{2} \left( \frac{N+1}{N} \right)^2 \frac{\log \left( \kappa(N) \log_2 (2t)/\delta \right)}{t}} \ ,
\]
where $\kappa(N)$ is the same constant as in the statement of Theorem~\ref{thm:anytime_1}. Note that the sequence $tz_t$ is increasing, which was required for the computations leading to \ref{eqn:kl_to_subgauss}.

Plugging in these values, we get
\begin{align*}
\P & \left( \exists t\in [2^k ,2^{k+1}]:\ \hat{\mu}_t -\mu > z_t \right) \\
& \leq \exp \left( -\frac{N+j-1}{N+1} \left( \frac{N+1}{N} \right)^2 \log \left( \kappa(N) \log_2 \left( 2^{k+1} \frac{N+j}{N} \right) /\delta \right) \right) \\
& \leq \delta^{\frac{N+1}{N}} \kappa(N)^{-\frac{N+1}{N}} (k+1)^{-\frac{N+1}{N}} \ ,
\end{align*}
where the last line follows by $j\geq 1$.

As for the first term in \ref{eqn:union_bound} we can also use Pinsker's inequality to get
\begin{align*}
\P (\exists t\in [N]:\ \hat{\mu}_t -\mu > z_t ) & \leq \exp \left( -\left( \frac{N+1}{N} \right)^2 \log \left( \kappa(N) \log_2 (2t) /\delta \right) \right) \\
& \leq \delta^{\frac{N+1}{N}} \kappa (N)^{-\frac{N+1}{N}} \sum_{t\in [N]} \log_2 (2t)^{-\frac{N+1}{N}} \ .
\end{align*}

The proof concludes the same way as that of Theorem~\ref{thm:anytime_1}, so that with the definition of $z_t$ above we have that
\[
\P (\exists t\in \N :\ \hat{\mu}_t -\mu > z_t ) \leq \delta \ .
\]

%%%%%%%%%%%%%%%%%%%%%%%%%%%%%

\section{The New Yorker Cartoon Caption Contest}\label{app:NYr}

\begin{figure}[h]\label{fig:NYr_example}
\begin{center}
\centering
\includegraphics[width = 4in]{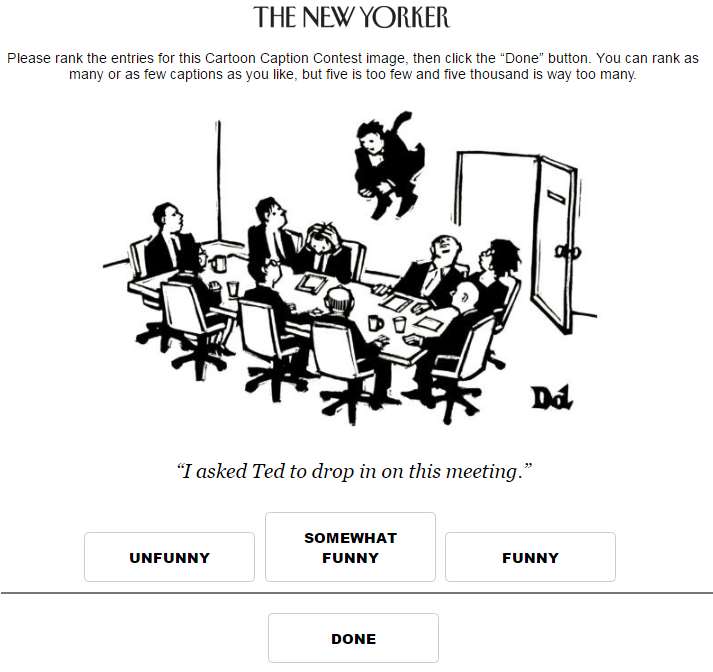}
\end{center}
\end{figure}

Each week a cartoon in need of a caption appears in The New Yorker magazine. The readers are invited to submit their ideas for funny captions to go with that cartoon. The New Yorker selects three finalists from the submissions, after which the readers select their favorite by voting online at \url{http://contest.newyorker.com/CaptionContest.aspx?tab=vote}.

\end{document}